\documentclass[onefignum,onetabnum]{siamart171218}

\usepackage{amsfonts,amssymb,amsmath,stmaryrd}
\usepackage{graphicx}
\usepackage{algorithmic}
\ifpdf
  \DeclareGraphicsExtensions{.eps,.pdf,.png,.jpg}
\else
  \DeclareGraphicsExtensions{.eps}
\fi


\newsiamremark{remark}{Remark}

\newsiamthm{assumption}{Assumption}

\headers{$\phi$-FEM}{M. Duprez and A. Lozinski}

\title{$\phi$-FEM: a finite element method on domains defined by level-sets}

\author{Michel Duprez\thanks{Sorbonne Universit\'es, UPMC Univ Paris 06, CNRS UMR 7598, Laboratoire Jacques-Louis Lions, F-75005, Paris, France.
  (\email{mduprez@math.cnrs.fr}, \url{http://www.imag.com/\string~ddoe/}).}
\and Alexei Lozinski\thanks{Laboratoire de Math\'ematiques de Besan\c{c}on, UMR CNRS 6623, Universit\'e Bourgogne Franche-Comt\'e,
16, route de Gray, 25030 Besan\c{c}on Cedex,
France  (\email{alexei.lozinski@univ-fcomte.fr}).}
}

\newcommand{\R}{\mathbb{R}}

\newcommand{\Th}{\mathcal{T}_h}
\DeclareMathOperator{\diam}{diam}

\newcommand{\triple}[1]{{\left\vert\kern-0.12ex\left\vert\kern-0.12ex\left\vert #1 
    \right\vert\kern-0.12ex\right\vert\kern-0.12ex\right\vert}}

\usepackage{tikz}
\usetikzlibrary{patterns}
\usepackage{pgfplots}
\pgfplotsset{compat=newest}

\newcommand{\logLogSlopeTriangle}[5]
{

    \pgfplotsextra
    {
        \pgfkeysgetvalue{/pgfplots/xmin}{\xmin}
        \pgfkeysgetvalue{/pgfplots/xmax}{\xmax}
        \pgfkeysgetvalue{/pgfplots/ymin}{\ymin}
        \pgfkeysgetvalue{/pgfplots/ymax}{\ymax}

        \pgfmathsetmacro{\xArel}{#1}
        \pgfmathsetmacro{\yArel}{#3}
        \pgfmathsetmacro{\xBrel}{#1-#2}
        \pgfmathsetmacro{\yBrel}{\yArel}
        \pgfmathsetmacro{\xCrel}{\xArel}

        \pgfmathsetmacro{\lnxB}{\xmin*(1-(#1-#2))+\xmax*(#1-#2)} 
        \pgfmathsetmacro{\lnxA}{\xmin*(1-#1)+\xmax*#1} 
        \pgfmathsetmacro{\lnyA}{\ymin*(1-#3)+\ymax*#3} 
        \pgfmathsetmacro{\lnyC}{\lnyA+#4*(\lnxA-\lnxB)}
        \pgfmathsetmacro{\yCrel}{\lnyC-\ymin)/(\ymax-\ymin)} 
        
        \coordinate (A) at (rel axis cs:\xArel,\yArel);
        \coordinate (B) at (rel axis cs:\xBrel,\yBrel);
        \coordinate (C) at (rel axis cs:\xCrel,\yCrel);

        \draw[#5]   (A)-- node[pos=0.5,anchor=north] {1}
                    (B)-- 
                    (C)-- node[pos=0.5,anchor=west] {#4}
                    cycle;
    }
}

\newcommand{\logLogSlopeTriangleinv}[5]
{

    \pgfplotsextra
    {
        \pgfkeysgetvalue{/pgfplots/xmin}{\xmin}
        \pgfkeysgetvalue{/pgfplots/xmax}{\xmax}
        \pgfkeysgetvalue{/pgfplots/ymin}{\ymin}
        \pgfkeysgetvalue{/pgfplots/ymax}{\ymax}

        \pgfmathsetmacro{\xArel}{#1}
        \pgfmathsetmacro{\yArel}{#3}
        \pgfmathsetmacro{\xBrel}{#1-#2}
        \pgfmathsetmacro{\yBrel}{\yArel}
        \pgfmathsetmacro{\xCrel}{\xBrel}

        \pgfmathsetmacro{\lnxB}{\xmin*(1-(#1-#2))+\xmax*(#1-#2)} 
        \pgfmathsetmacro{\lnxA}{\xmin*(1-#1)+\xmax*#1} 
        \pgfmathsetmacro{\lnyA}{\ymin*(1-#3)+\ymax*#3} 
        \pgfmathsetmacro{\lnyC}{\lnyA+#4*(\lnxA-\lnxB)}
        \pgfmathsetmacro{\yCrel}{(\lnyC-\ymin)/(\ymax-\ymin)} 
        
        \coordinate (A) at (rel axis cs:\xArel,\yArel);
        \coordinate (B) at (rel axis cs:\xBrel,\yBrel);
        \coordinate (C) at (rel axis cs:\xCrel,\yCrel);

        \draw[#5]   (A)-- node[pos=0.5,anchor=north] {1}
                    (B)-- node[pos=0.5,anchor=east] {#4}
                    (C)-- 
                    cycle;
    }
}



\ifpdf
\hypersetup{
  pdftitle={phi-FEM},
  pdfauthor={M. Duprez and A. Lozinski}
}
\fi




\begin{document}

\maketitle

\begin{abstract}
We propose a new fictitious domain finite element method, well suited for elliptic problems posed in a domain given by a level-set function without requiring a mesh fitting the boundary. To impose the Dirichlet boundary conditions, we search the approximation to the solution as a product of a finite element function with the given level-set function, which also approximated by finite elements.
Unlike other recent fictitious domain-type methods (XFEM, CutFEM), our approach does not need any non-standard numerical integration (on cut mesh elements or on the actual boundary).
We consider the Poisson equation discretized with piecewise  polynomial  Lagrange  finite  elements of any order and prove the optimal convergence of our method in the $H^1$-norm. Moreover, the discrete problem is proven to be well conditioned, \textit{i.e.} the condition number of the associated finite element matrix is of the same order as that of a standard finite element method on a comparable conforming mesh. Numerical results confirm the optimal convergence in both $H^1$ and $L^2$ norms.
\end{abstract}

\begin{keywords}
  Finite element method, fictitious domain, level-set
\end{keywords}

\begin{AMS}
  65N30, 65N85, 65N15 
\end{AMS}

\section{Introduction}
We consider the Poisson-Dirichlet problem
\begin{equation}
  \label{eq:poisson} \left\{ \begin{array}{cl}
    - \Delta u = f & \mathrm{on} \quad \Omega,\\
    u = 0 & \mathrm{on} \quad \Gamma,
  \end{array} \right.
\end{equation}
in a bounded domain $\Omega\subset\R^d$ ($d=2,3$) with smooth boundary $\Gamma$ assuming that $\Omega$ and $\Gamma$ are given by a level-set function $\phi$:
\begin{equation}\label{eq:level-set}
 \Omega:=\{\phi<0\} \text{ and } \Gamma:=\{\phi=0\}.
\end{equation} 
Such a representation is a popular and useful tool to deal with problems with evolving surfaces or interfaces \cite{osher}.
In the present article, the level-set function is supposed known on $\R^d$, smooth, and to behave near $\Gamma$ as the signed distance to $\Gamma$. We propose a finite element method for the problem above which is easy to implement, does not require a mesh fitted to $\Gamma$, and is guaranteed to converge optimally. Our basic idea is very simple: one cannot impose the Dirichlet boundary conditions in the usual manner since the boundary $\Gamma$ is not resolved by the mesh, but one can search the approximation to $u$ as a product of a finite element function $w_h$ with the level-set $\phi$ itself: such a product obviously vanishes on $\Gamma$. In order to make this idea work, some stabilization should be added to the scheme as outlined below and explained in detail in the next section. We coin our method $\phi$-FEM in accordance with the tradition of denoting the level-sets by $\phi$.

More specifically, let us assume that $\Omega$ lies inside a simply shaped domain $\mathcal{O}$ (typically a box in $\R^d$) and introduce a quasi-uniform simplicial mesh $\Th^{\mathcal{O}}$ on $\mathcal{O}$ (the background mesh). Let $\Th$ be a submesh of $\Th^{\mathcal{O}}$ obtained by getting rid of mesh elements lying entirely outside $\Omega$ (the definition of $\Th$ will be slightly changed afterwords). Denote by $\Omega_h$ the domain covered by the mesh $\Th$ (so that typically $\Omega_h$ is only slightly larger than $\Omega$). Our starting point is the following formal observation: assuming that the right-hand side $f$ is actually well defined on $\Omega_h$, and the solution $u$ can be extended to $\Omega_h$ so that $- \Delta u = f$ on $\Omega_h$, we can introduce the new unknown $w\in H^1(\Omega_h)$ such that $u=\phi w$ and the boundary condition on $\Gamma$ is 	automatically satisfied. An integration by parts yields then
\begin{equation}\label{wphi}
\int_{\Omega_h} \nabla (\phi w) \cdot \nabla (\phi v) -
\int_{\partial \Omega_h} \frac{\partial}{\partial n}  (\phi w)
\phi v = \int_{\Omega_h} f \phi v, \quad\forall v\in H^1(\Omega_h).
\end{equation}
Given a finite element approximation $\phi_h$ to $\phi$ on the mesh  $\Th$ and a finite element space $V_h$ on $\Th$, one can then try to search for $w_h\in V_h$ such that the equality in (\ref{wphi}) with the subscripts $h$ everywhere is satisfied for all the test functions $v_h\in V_h$ and to reconstruct an approximate solution $u_h$ to (\ref{eq:poisson}) as $\phi_h w_h$. These considerations are very formal and, not surprisingly, such a method does not work as is. We shall show however that it becomes a valid scheme once a proper stabilization in the vein of the Ghost penalty \cite{burmanghost} is added. The details on the stabilization and on the resulting finite element scheme are given in the next section. 

Our method shares many features with other finite elements methods on non-matching meshes, such as XFEM  \cite{moes99,moes06,sukumar2001,HaslingerRenard} or CutFEM \cite{burman1,burman2,burman3,burman15}. Unlike the present work, the integrals over $\Omega$ are kept in XFEM or CutFEM discretizations, which is cumbersome in practice since one needs to implement the integration on the boundary $\Gamma$ and on parts of mesh elements cut by the boundary. The first attempt to alleviate this practical difficulty was done in \cite{lozinski2018} with method that does not require to perform the integration on the cut elements, but needs still the integration on $\Gamma$. In the present article, we fully avoid any non trivial numerical integration: all the integration in $\phi$-FEM is performed on the whole mesh elements, and there are no integrals on $\Gamma$. We also note that an easily implementable version of $\phi$-FEM is here developed for $P_k$ finite elements of any order $k\ge 1$. This should be contrasted with the situation in CutFEM where some additional terms should be added in order to achieve the optimal $P_k$ accuracy if $k>1$, cf. \cite{burman18}.

The article is structured as follows: our $\phi$-FEM method is presented in the next section. We also give there the assumptions on the level-set $\phi$ and on the mesh, and announce our main result: the \textit{a priori} error estimate for $\phi$-FEM. We work with standard continuous $P_k$ finite elements on a simplicial mesh and prove the optimal order $h^k$ for the error  in the $H^1$ norm and the (slightly) suboptimal order $h^{k+1/2}$ for the error  in the $L^2$ norm. The proofs of these estimates are the subject of Section 3. Moreover, we prove in Section 4 that the associated finite element matrix has the condition number of order $1/h^2$, the same as that of a standard finite element method.  Some numerical illustrations are given in Section 5. 

\section{Definitions, assumptions, description of $\phi$-FEM, and the main result}
We recall that we work with a bounded domain $\Omega\subset\mathcal{O}\subset\R^d$ ($d=2,3$) with boundary $\Gamma$ given by a level-set $\phi$ as in (\ref{eq:level-set}). We assume that $\phi$ is sufficiently smooth and behaves near $\Gamma$ as the signed distance to $\Gamma$ after an appropriate change of local coordinates. More specifically, we fix an integer $k\ge 1$ and introduce the following
\begin{assumption}\label{asm0}
The boundary $\Gamma$ can be covered by open sets $\mathcal{O}_i$, $i=1,\ldots,I$ and one can introduce on every $\mathcal{O}_i$ local coordinates $\xi_1,\ldots,\xi_d$ with $\xi_d=\phi$ such that all the partial derivatives $\partial^\alpha\xi/\partial x^\alpha$ and $\partial^\alpha x/\partial \xi^\alpha$ up to order $k+1$ are bounded by some $C_0>0$. Morover, $|\phi|\ge m$ on $\mathcal{O}\setminus\cup_{i=1,\ldots,I}\mathcal{O}_i$ with some $m>0$.
\end{assumption}

Let $\Th^{\mathcal{O}}$ be a quasi-uniform simplicial mesh on $\mathcal{O}$ of mesh size $h$, meaning that $\diam(T)\le h$ and $\rho(T)\ge \beta h$ for all simplexes $T\in\Th^{\mathcal{O}}$ with some mesh regularity parameter $\beta>0$ ($\rho(T)$ stands for the radius of the largest ball inscribed in $T$). Consider, for an integer $l\ge 1$, the finite element space
 \begin{eqnarray*}
  V_{h,\mathcal{O}}^{(l)} = \{v_h \in H^1 (\mathcal{O}) : v_h |_T \in \mathbb{P}_l (T)
  \  \forall ~T \in \mathcal{T}_h^{\mathcal{O}} \}.
\end{eqnarray*}
Introduce an approximate level-set $\phi_h\in V_{h,\mathcal{O}}^{(l)}$ by
\begin{equation}\label{eq:phi_h}
\phi_h:=I_{h,\mathcal{O}}^{(l)}(\phi)
\end{equation}
 where $I_{h,\mathcal{O}}^{(l)}$ is the standard Lagrange interpolation operator on $V_{h,\mathcal{O}}^{(l)}$. We shall use this to approximate the physical domain $\Omega=\{\phi<0\}$ with smooth boundary $\Gamma=\{\phi=0\}$ by the domain $\{\phi_h<0\}$ with the piecewise polynomial boundary $\Gamma_h=\{\phi_h=0\}$. We employ $\phi_h$ rather than $\phi$ in our numerical method in order to simplify its implementation (all the integrals in the forthcoming finite element formulation will involve only the piecewise polynomials).   This feature will also turn out crucial in our theoretical analysis.

 We now introduce the computational mesh $\Th$ as the subset of $\Th^{\mathcal{O}}$ composed of the triangles/tetrahedrons having a non-empty intersection with the approximate domain $\{\phi_h<0\}$. We denote the domain occupied by $\Th$ by $\Omega_h$, \textit{i.e.}
\[\Th:=\{T\in \Th^{\mathcal{O}}:T\cap \{\phi_h<0\}\neq  \varnothing\}
\mbox{~~~~~and~~~~~} {\Omega_h} = (\cup_{T \in \Th}T)^o. \]
\begin{remark}
Note that we do not necessarily have $\Omega\subset\Omega_h$. Indeed some mesh elements can be cut by the exact boundary  $\{\phi=0\}$ but not with the approximate one  $\{\phi_h=0\}$. Such a mesh element will not be part of $\Th$ although it contains a small portion of $\Omega$.
\end{remark}

Fix an integer $k\ge 1$ (the same $k$ as in Assumption \ref{asm0}) and consider the finite element space 
\begin{eqnarray*}
  V_h^{(k)} = \{v_h \in H^1 (\Omega_h) : v_h |_T \in \mathbb{P}_k (T)
  \  \forall ~T \in \mathcal{T}_h \}.
\end{eqnarray*}
The $\phi$-FEM approximation to \eqref{eq:poisson} is introduced as follows: 
find $w_h\in V_h^{(k)}$ such that:
\begin{equation}\label{eq:discret prob}
a_h(w_h,v_h)=l_h(v_h)\mbox{ for all }v_h\in V_h^{(k)},
\end{equation}
where the bilinear form $a_h$ and the linear form $l_h$ are defined by
\begin{equation}\label{ah}
   a_h(w,v):=\displaystyle \int_{\Omega_h} \nabla (\phi_h w) \cdot \nabla (\phi_h v) -
\int_{\partial \Omega_h} \frac{\partial}{\partial n} (\phi_h w) \phi_h
   v + G_h (w, v) 
\end{equation}	
   and
   \[\displaystyle   l_h(v):= \int_{\Omega_h} f \phi_h v+ G_h^{rhs} (v), \]
with $G_h$ and $G_h^{rhs}$ standing for
\[ G_h(w, v): = \displaystyle\sigma h\sum_{E\in  \mathcal{F}_h^{\Gamma}} 
\int_E \left[ \frac{\partial}{\partial n}
   (\phi_h w) \right] \left[ \frac{\partial}{\partial n} ( \phi_hv)\right]
   + \sigma h^2 \sum_{T \in\Th^{\Gamma}} \int_T \Delta(\phi_h w) \Delta(\phi_h v)  \]
   and
\[   G_h^{rhs} (v): =\displaystyle- \sigma h^2\sum_{T \in\Th^{\Gamma}} \int_T f \Delta (\phi_h v)   \]
where $\sigma> 0$ is an $h$-independent stabilization parameter,  $\Th^{\Gamma}\subset\Th$ contains the mesh elements cut by the approximate boundary $\Gamma_h=\{\phi_h=0\}$, 
\textit{i.e.}
\begin{equation}\label{eq:def ThGamma}
\Th^{\Gamma}=\{T\in\Th:T\cap\Gamma_h\neq\varnothing\},
\quad
\Omega_h^{\Gamma}:=\left(\cup_{T\in \mathcal{T}_h^{\Gamma}}T\right)^o.
\end{equation}
and $\mathcal{F}_h^{\Gamma}$ collects the interior facets of the mesh $\Th$ either cut by $\Gamma_h$ or  belonging to a cut mesh element
\begin{equation*}
  \mathcal{F}_h^{\Gamma} = \{E \text{ (an internal facet of } \mathcal{T}_h)
  \text{ such that } \exists T \in \mathcal{T}_h : T \cap \Gamma_h \neq
  \varnothing \text{ and } E \in \partial T\}.
\end{equation*}
The brackets inside the integral over $E\in\mathcal{F}_h^{\Gamma}$ in the formula for $G_h$ stand for the jump over the facet $E$. 

\begin{remark}
The term $G_h$ in $a_h$ is the stabilization which differentiate the method introduced here from its naive version (\ref{wphi}) from the Introduction. The first part in $G_h$ actually coincides with the ghost penalty as introduced in \cite{burmanghost} for $P_1$ finite elements. We add here another term involving the laplacian of $\phi_hw_h$. To make the stabilization consistent, this term is compensated by yet another term on the right-hand side -- $G_h^{rhs}$. Indeed, $\phi_hw_h$ should approximate the exact solution $u$ and $-\Delta u=f$. We shall show that such a stabilization makes the bilinear form $a_h$ coercive on $P_k$ finite elements of any order $k\ge 1$. Note that the usual choice for the ghost stabilization in the CutFEM literature is more complicated in the case of $P_k$ elements, $k>1$, cf \cite{burman3}: it involves the jumps of higher order normal derivatives up to the order $k$. We believe that our additional stabilization with the laplacians could be used in the CutFEM context as well. In this way, one would avoid the derivatives of order $>2$ even on polynomials of degree $k>2$ making the implementation somewhat simpler.   
\end{remark}

We shall also need the following assumptions on the mesh $\Th$, more specifically on the intersection of elements of $\Th$ with the approximate boundary $\Gamma_h=\{\phi_h=0\}$. This assumption is normally satisfied for $h$ small enough, cf. the discussion in \cite{lozinski2018}.

\begin{assumption}\label{asm2}
  The approximate boundary $\Gamma_h$ can be covered by element patch\-es $\{\Pi_i \}_{i = 1,
  \ldots, N_{\Pi}}$ having the following properties:
  \begin{itemize}
    \item Each patch $\Pi_i$ is a connected set composed of a mesh element $T_i\in\Th\setminus\Th^{\Gamma}$ and some mesh elements cut by $\Gamma_h$. More precisely, $\Pi_i = T_i \cup \Pi_i^{\Gamma}$ with $\Pi_i^{\Gamma}\subset\mathcal{T}_h^{\Gamma}$ containing at most $M$ mesh elements;    
    \item $\mathcal{T}_h^{\Gamma} = \cup_{i = 1}^{N_{\Pi}} \Pi_i^{\Gamma}$;
    \item $\Pi_i$ and $\Pi_j$ are disjoint if $i \neq j$.
  \end{itemize}
\end{assumption}

In what follows, $\|\cdot\|_{k,\mathcal{D}}$ (resp. $|\cdot|_{k,\mathcal{D}}$) denote the norm (resp. the semi-norm) in the Sobolev space $H^k(\mathcal{D})$ with an integer $k\ge 0$ where $\mathcal{D}$ can be a domain in $\mathbb{R}^d$ or a $(d-1)$-dimensional manifold. 

\begin{theorem}\label{th:error}
Suppose that Assumptions \ref{asm0} and \ref{asm2} hold true, $l\ge k$, the mesh $\Th$ is quasi-uniform, and $f\in H^k(\Omega_h\cup\Omega)$.
Let $u\in H^{k+2}(\Omega)$  be the solution to \eqref{eq:poisson} and $w_h\in V_h^{(k)}$ be the solution to \eqref{eq:discret prob}.  Denoting $u_h:=\phi_h w_h $, it holds
\begin{equation}\label{H1err}
  | u - u_h|_{1, \Omega\cap\Omega_h} \le Ch^k \|f \|_{k, \Omega\cup\Omega_h}
\end{equation}	
with a constant $C>0$ depending on the constants in Assumptions \ref{asm0}, \ref{asm2} (and thus depending on the regularity of $\phi$), and on the mesh regularity, but independent of $h$, $f$, and $u$. Moreover, supposing $\Omega\subset\Omega_h$	
\begin{equation}\label{L2err}
   \| u - u_h\|_{0, \Omega} \le Ch^{k+1/2} \|f \|_{k, \Omega_h}
\end{equation}		
with a constant $C>0$ of the same type.	
\end{theorem}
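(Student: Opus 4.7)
The plan is to follow a Strang-type framework, analogous to the analysis of other stabilized fictitious-domain methods: establish (i) coercivity of $a_h$ on a mesh-dependent norm, (ii) an approximate Galerkin orthogonality for the continuous solution, and (iii) interpolation estimates for the auxiliary unknown $w := u/\phi$; then combine via a Céa-type argument for \eqref{H1err} and an Aubin--Nitsche duality for \eqref{L2err}. The auxiliary unknown $w$ is well-defined and $H^{k+1}$-smooth in a neighborhood of $\overline{\Omega}$ thanks to Assumption \ref{asm0}: in the local coordinates $(\xi_1,\ldots,\xi_{d-1},\phi)$, the function $u$ vanishes on $\{\phi=0\}$, so dividing by $\phi$ preserves regularity. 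After extending $u$ (and hence $w$) to a fixed neighborhood of $\Omega_h$ by a bounded Sobolev extension, classical Lagrange interpolation on $\Th$ applies.

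Introducing the norm $\triple{v}^2 := |\phi_h v|_{1,\Omega_h}^2 + G_h(v,v)$ on $V_h^{(k)}$, the first technical step is coercivity $a_h(v,v) \ge \alpha \triple{v}^2$. The main obstacle is the boundary contribution $\int_{\partial\Omega_h} \partial_n(\phi_h v)\,\phi_h v$ in $a_h$, which has no obvious sign. Every boundary facet of $\Omega_h$ lies in a cut element, hence in some patch $\Pi_i$ from Assumption \ref{asm2}; using discrete trace/inverse inequalities together with the two ghost-penalty-style terms in $G_h$ (the facet-jump of normal derivatives and the elementwise laplacian), one propagates control of $\phi_h v$ from the uncut element $T_i \in \Pi_i$ across the cut strip $\Pi_i^\Gamma$, absorbing the offending boundary term. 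The second step is a consistency identity: plugging the extended exact solution $u = \phi w$ into $a_h(\cdot, v_h)$ and using $-\Delta u = f$ on $\Omega$ together with the cancellation provided by $G_h^{rhs}$, I expect
\begin{equation*}
a_h(w, v_h) - l_h(v_h) = R_h(v_h), \qquad |R_h(v_h)| \le C h^k \|f\|_{k,\Omega\cup\Omega_h}\, \triple{v_h},
\end{equation*}
where $R_h$ collects geometric residuals arising on the narrow strip $\Omega_h \setminus \Omega$ (of width $O(h^{l+1})$ with $l \ge k$) and from the $\phi_h-\phi$ mismatch ($O(h^{l+1})$ in $L^\infty$, $O(h^l)$ in $W^{1,\infty}$). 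Combined with the interpolation bound $\triple{w - I_h w} \lesssim h^k\|w\|_{k+1,\Omega_h}$ and coercivity, estimate \eqref{H1err} follows.

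For \eqref{L2err} I apply an Aubin--Nitsche duality. Let $z \in H^2(\Omega) \cap H^1_0(\Omega)$ solve $-\Delta z = u - u_h$ in $\Omega$, $z = 0$ on $\Gamma$, so that $\|z\|_{2,\Omega} \le C \|u - u_h\|_{0,\Omega}$. Integration by parts gives
\begin{equation*}
\|u - u_h\|_{0,\Omega}^2 = \int_\Omega \nabla z \cdot \nabla(u - u_h) - \int_\Gamma \partial_n z\,(u - u_h).
\end{equation*}
The volume term is treated classically, introducing the Lagrange interpolant of a suitable extension of $z$ to $\Omega_h$ and invoking the consistency identity with \eqref{H1err}; this contributes $O(h^{k+1})$. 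The boundary term is the source of the half-order loss: $u$ vanishes on $\Gamma$ but $u_h = \phi_h w_h$ only vanishes on $\Gamma_h$, and although the pointwise mismatch $u_h|_\Gamma = (\phi_h - \phi)w_h|_\Gamma$ is small, controlling its $L^2(\Gamma)$ norm costs a trace inequality on the strip $\Omega_h^\Gamma$ of width $O(h)$, losing exactly a factor $h^{1/2}$ relative to the $H^1$ rate. Fine bookkeeping of these geometric residuals is the principal technical challenge; coercivity through the ghost penalty on cut cells is the most delicate point, though it parallels the philosophy of the CutFEM analysis.
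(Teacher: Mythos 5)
Your overall architecture coincides with the paper's: a Hardy-type inequality giving $w=u/\phi\in H^{k+1}$ via the local coordinates of Assumption \ref{asm0}, coercivity of $a_h$ in the norm $\bigl(|\phi_h v_h|_{1,\Omega_h}^2+G_h(v_h,v_h)\bigr)^{1/2}$ obtained by absorbing the $\partial\Omega_h$ boundary term through the patches of Assumption \ref{asm2} and the two stabilization terms, an approximate Galerkin orthogonality against the extended exact solution, and a C\'ea/duality combination. The $H^1$ part of your sketch is a faithful outline of the paper's proof; the paper additionally needs a small algebraic lemma (a polynomial with vanishing Cauchy data on a facet and vanishing Laplacian is identically zero) to run the patch/compactness argument with the Laplacian penalty, and a Poincar\'e-type inequality on cut elements exploiting that $\phi_h v_h$ vanishes somewhere on each $T\in\Th^{\Gamma}$, but these are details one would uncover when writing the coercivity proof out.

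The $L^2$ part is where your accounting goes wrong. You attribute the $h^{1/2}$ loss to the boundary term $\int_\Gamma \partial_n z\,(u-u_h)$ and declare the volume term $\int_\Omega\nabla z\cdot\nabla(u-u_h)$ ``classical'' and $O(h^{k+1})$; in fact it is the other way around. Since $\phi_h$ interpolates $\phi$ to order $l+1\ge k+1$, the distance between $\Gamma$ and $\Gamma_h$ is $O(h^{k+1})$ and $u_h$ vanishes exactly on $\Gamma_h$, so the paper obtains $\|u-u_h\|_{0,\Gamma}\le Ch^{k+1}(\|u-u_h\|_{0,\Omega}+\|f\|_{k,\Omega_h})$ and the $\Gamma$-term contributes at the full order $h^{k+1}$: no half order is lost there. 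The volume term, by contrast, cannot be treated classically: to invoke the Galerkin orthogonality one must pass from $\Omega$ to $\Omega_h$, which generates (i) an integral over the strip $\Omega_h\setminus\Omega$, (ii) a boundary term on $\partial\Omega_h$, (iii) the stabilization $G_h$ evaluated on the interpolant of the dual variable $y=\tilde z/\phi$, and (iv) the data mismatch $f-\tilde f$ on the strip. Each of these is only $O(h^{k+1/2})$, because the dual solution gains merely a factor $\sqrt h$ on the $O(h)$-wide band $\Omega_h^{\Gamma}$ (e.g.\ $|\tilde z|_{1,\Omega_h^{\Gamma}}\le C\sqrt h\,\|u-u_h\|_{0,\Omega}$ and $G_h(y_h,y_h)^{1/2}\le C\sqrt h\,\|u-u_h\|_{0,\Omega}$). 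This is the actual bottleneck of the $L^2$ estimate and the part requiring genuine work; your sketch does not address it, and the claim that the volume contribution is $O(h^{k+1})$ is unsupported (indeed, if it were provable the theorem would give the optimal rate, which the authors cannot establish despite observing it numerically).
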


\section{Proof of the \textit{a priori} error estimate}

This section is devoted to the proof of Theorem \ref{th:error}. We first give some preliminary results, starting with a Hardy-type inequality which will allow us to properly introduce the new unknown $w=u/\phi$. This will be followed by some technical lemmas, mostly about the properties of functions of the form $\phi_hv_h$ with $v_h\in V_h^{(k)}$.

\subsection{A Hardy-type inequality}

\begin{lemma}
  \label{lemma:hardy}We assume that the domain $\Omega$ is given by the
  level-set $\phi$, cf. (\ref{eq:level-set}), and satisfies Assumption \ref{asm0}. Then, for any $u \in H^{k + 1} (\mathcal{O})$ vanishing on $\Gamma$,
  \[ \left\| \frac{u}{\phi} \right\|_{k, \mathcal{O}} \le C \| u
     \|_{k + 1, \mathcal{O}} \]
  with $C > 0$ depending only on the constants in Assumption \ref{asm0}. 
\end{lemma}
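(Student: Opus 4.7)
The goal is a Hardy-type bound in $H^k$; since $\phi$ vanishes exactly on $\Gamma$ and $u$ does too, the quotient $u/\phi$ is morally ``the normal derivative of $u$ restored from its integral''. I would follow the classical strategy: localize with a partition of unity adapted to the covering $\{\mathcal{O}_i\}$ from Assumption~\ref{asm0}, handle the part far from $\Gamma$ trivially, and treat the part near $\Gamma$ in the local coordinates $(\xi_1,\dots,\xi_{d-1},\xi_d=\phi)$ where the Hardy structure becomes one-dimensional in $\xi_d$.

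\textbf{Localization.} Pick a smooth partition of unity $1=\chi_0+\sum_{i=1}^I\chi_i$ on $\mathcal{O}$ with $\mathrm{supp}\,\chi_i\Subset\mathcal{O}_i$ for $i\ge 1$ and $\mathrm{supp}\,\chi_0\subset\{|\phi|\ge m/2\}$ (possible because $|\phi|\ge m$ outside $\cup_i\mathcal{O}_i$). Then $u/\phi=\sum_{i=0}^I(\chi_i u)/\phi$ and it suffices to bound each piece. On $\mathrm{supp}\,\chi_0$ the function $1/\phi$ is $C^\infty$ with $C^{k}$-norm controlled by $m$ and $C_0$, so Leibniz immediately gives $\|(\chi_0 u)/\phi\|_{k,\mathcal{O}}\le C\|u\|_{k,\mathcal{O}}$.

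\textbf{Near $\Gamma$, one-dimensional Hardy in local coordinates.} Fix $i\ge 1$ and set $v=\chi_i u$. In the coordinates $\xi$ on $\mathcal{O}_i$, the function $v$ is compactly supported and vanishes at $\xi_d=0$, so by the fundamental theorem of calculus,
\begin{equation*}
\frac{v(\xi)}{\xi_d}=\int_0^1 \partial_{\xi_d}v(\xi_1,\dots,\xi_{d-1},s\xi_d)\,ds.
\end{equation*}
For any multi-index $\alpha$ with $|\alpha|\le k$, differentiation under the integral sign yields
\begin{equation*}
\partial_\xi^\alpha\!\left(\frac{v}{\xi_d}\right)(\xi)=\int_0^1 s^{\alpha_d}\,(\partial_\xi^{\alpha'}\partial_{\xi_d}v)(\xi_1,\dots,\xi_{d-1},s\xi_d)\,ds,
\end{equation*}
where $\alpha'=(\alpha_1,\dots,\alpha_{d-1},\alpha_d)$ so that the order of differentiation on $v$ inside the integral is $|\alpha|+1\le k+1$. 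Minkowski's integral inequality followed by the substitution $t=s\xi_d$ (which contributes a factor $s^{-1/2}$ to the $L^2$-norm) gives
\begin{equation*}
\left\|\partial_\xi^\alpha(v/\xi_d)\right\|_{0,\mathcal{O}_i}\le \int_0^1 s^{\alpha_d-1/2}\,ds\;\|v\|_{k+1,\mathcal{O}_i}\le 2\,\|v\|_{k+1,\mathcal{O}_i}.
\end{equation*}

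\textbf{Return to Cartesian coordinates.} Passing from $\xi$ back to $x$ via the chain rule, and using the bounds on the derivatives of $x\mapsto\xi$ and $\xi\mapsto x$ up to order $k+1$ provided by Assumption~\ref{asm0}, transforms this into an $H^k$-bound in $x$-coordinates of the form $\|(\chi_i u)/\phi\|_{k,\mathcal{O}_i}\le C\|\chi_i u\|_{k+1,\mathcal{O}_i}\le C\|u\|_{k+1,\mathcal{O}}$, with the last step absorbing derivatives of $\chi_i$. Summing over $i=0,\dots,I$ produces the claimed estimate. The only delicate point is the coordinated use of Minkowski's inequality with the $s^{-1/2}$ loss from the substitution, but the exponent $\alpha_d\ge 0$ keeps $s^{\alpha_d-1/2}$ integrable near $s=0$, so no additional work is needed. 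No obstacle of substance beyond careful bookkeeping through the change of variables.
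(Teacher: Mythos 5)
Your proposal is correct and follows essentially the same route as the paper: the representation $v/\xi_d=\int_0^1\partial_{\xi_d}v(\xi',s\xi_d)\,ds$, Minkowski's integral inequality with the $s^{-1/2}$ factor from rescaling, flattening via the local coordinates $\xi_d=\phi$ of Assumption~\ref{asm0}, and a partition of unity with the trivial bound where $|\phi|\ge m$. The only cosmetic difference is that you carry out the multi-index computation in $\mathbb{R}^d$ in one stroke, whereas the paper first proves the one-dimensional Hardy inequality and then iterates it coordinate by coordinate.
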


\begin{proof}
  The proof is decomposed into three steps:
  
  \textbf{Step 1. }We start in the one dimensional setting and adapt the proof oF Hardy's inequality from \cite{HardyBlog}. Let $u : \mathbb{R} \rightarrow \mathbb{R}$ be a
  $C^{\infty}$ function with compact support such that $u (0) = 0$. Set $w (x)
  = u (x) / x$ for $x \neq 0$. We shall prove that $w$ can be extended to a
  $C^{\infty}$ function on $\mathbb{R}$ and, for any \ integer $s \ge
  0$,
  \begin{equation}
    \label{Hardys} \left( \int_{- \infty}^{\infty} |w^{(s)} (x) |^2 
    \hspace{0.17em} dx \right)^{1 / 2} \le C \left( \int_{- \infty}^{\infty}
    |u^{(s + 1)} (x) |^2  \hspace{0.17em} dx \right)^{1 / 2}
  \end{equation}
  with $C$ depending only on $s$.
  
  Observe, for any $x > 0$,
  \[ w (x) = \frac{u(x)}{x}= \frac{1}{x}\int_0^x u' (t) dt = \int_0^1 u'  (xt) dt. \]
  Hence
  \begin{equation}
    \label{ws} w^{(s)} (x) = \int_0^1 u^{(s + 1)}  (xt) t^s dt.
  \end{equation}
  We have now by the integral version of Minkowski's inequality
    \begin{multline*}
    \left( \int_0^{\infty} |w^{(s)} (x) |^2  \hspace{0.17em} dx \right)^{1 /
    2} = \left( \int_0^{\infty} \left| \int_0^1 u^{(s + 1)} (xt) t^s dt
    \right|^2  \hspace{0.17em} dx \right)^{1 / 2} \\
		 \le \int_0^1 \left(\int_0^{\infty} |u^{(s + 1)} (xt) |^2  \hspace{0.17em} dx \right)^{1 / 2} 
    \hspace{0.17em} t^s dt
   = C \left( \int_0^{\infty} |u^{(s + 1)} (x) |^2  \hspace{0.17em} dx
     \right)^{1 / 2} 
		\end{multline*}
  with $C =\int_0^1 t^{s - 1 / 2} dt = 1 / (s + 1 / 2)$. Applying the same argument to negative $x$ we
  also have
  \[ \left( \int_{- \infty}^0 |w^{(s)} (x) |^2  \hspace{0.17em} dx \right)^{1
     / 2} \le C \left( \int_{- \infty}^0 |u^{(s + 1)} (x) |^2  \hspace{0.17em}
     dx \right)^{1 / 2} .\]
  Adding this to the preceding bound on $(0, + \infty)$ we get (\ref{Hardys})
  assuming that $w^{(s)}$ is continuous at $x = 0$. To prove this last point,
  we pass to the limit $x \rightarrow 0^+$ in (\ref{ws}) to see that
  $\lim_{x \rightarrow 0^+} w^{(s)} (x) = u^{(s + 1)} (0) /(s+1)$. 
  The same formula holds for the limit as $x \rightarrow 0^-$. This means that
  $w$ is continuous if we define $w (0) = u' (0)$ and $w^{(s)} (0)$ exists for
  all $s$.
  
  \textbf{Step 2.} Let now $u : \mathbb{R}^d \rightarrow \mathbb{R}$ be a
  compactly supported $C^{\infty}$ function vanishing at $x_d = 0$ and set $w
  = u / x_d$. We shall prove
  \begin{equation}
    \label{HardyRn} |w|_{k, \mathbb{R}^d} \le C |u|_{k + 1, \mathbb{R}^d}
  \end{equation}
  with $C$ depending only on $k$.
  
  To keep things simple, we give here the proof for the case $d = 2$ only (the
  case $d = 3$ is similar but would involve more complicated notations). Take
  any integers $t, s \ge 0$ with $t + s = k$, apply (\ref{Hardys}) to
  $\frac{\partial^t w}{\partial x_1^t} = \frac{1}{x_2}  \frac{\partial^t
  u}{\partial x_1^t}$ treated as a function of $x_2$ (note that
  $\frac{\partial^t u}{\partial x_1^t}$ vanishes at $x_2 = 0$) and then
  integrate with respect to $x_1$. This gives
  \[ \left\| \frac{\partial^k w}{\partial x_1^t \partial x_2^s} \right\|_{0,
     \mathbb{R}^d} \le C \left\| \frac{\partial^{k + 1} u}{\partial x_1^t \partial
     x_2^{s + 1}} \right\|_{0, \mathbb{R}^d} . \]
  Thus,
  \[ |w|_{k, \mathbb{R}^d}^2 = \sum_{s = 0}^k \left\| \frac{\partial^k w}{\partial
     x_1^{k - s} \partial x_2^s} \right\|_{0, \mathbb{R}^d}^2 \le C^2  \sum_{s
     = 0}^k \left\| \frac{\partial^{k + 1} u}{\partial x_1^{k - s} \partial x_2^{s +
     1}} \right\|_{0, \mathbb{R}^d}^2 \le C^2 |u|_{k + 1, \mathbb{R}^d}^2 \]
  so that (\ref{HardyRn}) is proved.
  
  \textbf{Step 3.} Consider finally the domains $\Omega \subset \mathcal{O}$
  as announced in the statement of this Lemma, let $u$ be a $C^{\infty}$
  function on $\mathcal{O}$ vanishing on $\Gamma$, and set $w = u / \phi$.
  Assume first that $u$ is compactly supported in $\mathcal{O}_l$, one of the
  sets forming the cover of $\Gamma$ as announced in Assumption \ref{asm0}. Recall
  the local coordinated $\xi_1, \ldots, \xi_d$ on $\mathcal{O}_l$ with $\xi_d
  = \phi$ and denote by $\hat{u}$ (resp. $\hat{w}$) the function $u$ (resp.
  $w$) treated as a function of $\xi_1, \ldots, \xi_d$. Since $\hat{w} =
  \hat{u} / \xi_d$, (\ref{HardyRn}) implies $\| \hat{w} \|_{k, \mathbb{R}^d}
  \le C \| \hat{u} \|_{k + 1, \mathbb{R}^d}$. Passing from the
  coordinates $x_1, \ldots, x_d$ to $\xi_1, \ldots, \xi_d$ and backwards we
  conclude $\| w \|_{k, \mathcal{O}_l} \le C \| u \|_{k + 1,
  \mathcal{O}_l}$ with a constant $C$ that depends on the maximum of partial
  derivatives $\partial^{\alpha} x / \partial \xi^{\alpha}$ up to order $k$
  and that of $\partial^{\alpha} \xi / \partial x^{\alpha}$ up to order $k+1$.
  Introducing a partition of unity subject to the cover $\{ \mathcal{O}_l \}$
  we can now easily prove $\| w \|_{k, \mathcal{O}} \le C \| u \|_{k +
  1, \mathcal{O}}$ noting that $1 / \phi$ is bounded outside $\cup_l \{
  \mathcal{O}_l \}$. This estimate holds also true for $u \in H^{k + 1}
  (\mathcal{O})$ by density of $C^{\infty}$ in $H^{k + 1}$.
\end{proof}
\begin{remark}\label{RemAsm0} Assumption \ref{asm0} used in the lemma above implies in particular that $\phi$ is of class $C^{k+1}$, and the constant $C_0$ form this Assumption serves as an upper bound for the norm of $\phi$ in $C^{k+1}$. Note that, this can be relaxed. For example, in the case $k=0$, it suffices to require that $\phi$ is in $W^{1,\infty}$. In particular, $\phi$ can be a continuous piecewise polynomial function with its gradient bounded almost everywhere by $C_0$.   
\end{remark}

\subsection{Some technical lemmas}

\begin{lemma}\label{lemma:poly}
Let $T$ be a triangle/tetrahedron, $E$ one of its
sides and $p$ a polynomial on $T$ of degree $l\ge 0$ such that $p=\frac{\partial
p}{\partial n}=0$ on $E$ and $\Delta p = 0$ on $T$. Then $p = 0$ on $T$.
\end{lemma}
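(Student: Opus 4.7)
The plan is to reduce everything to an algebraic recurrence by working in coordinates adapted to the face $E$. Choose a Cartesian system $(y_1,\dots,y_d)$ such that $E$ lies in the hyperplane $\{y_d=0\}$ and the unit normal $n$ to $E$ coincides with $\pm e_d$. Since $p$ is a polynomial on $T$, it extends uniquely to a polynomial on all of $\R^d$, and we can expand it in powers of the normal variable:
\[
  p(y',y_d) \;=\; \sum_{j=0}^{l} y_d^{\,j}\, p_j(y'), \qquad y'=(y_1,\dots,y_{d-1}),
\]
with $p_j$ a polynomial of degree at most $l-j$ in the tangential variables.

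Next I would translate the three hypotheses into conditions on the $p_j$. The vanishing of $p$ on $E$ gives $p_0\equiv 0$; the vanishing of $\partial p/\partial n$ on $E$ gives $p_1\equiv 0$. For the harmonicity, write $\Delta = \partial_d^{\,2}+\Delta'$ with $\Delta'=\partial_1^{\,2}+\dots+\partial_{d-1}^{\,2}$. Differentiating the expansion and matching coefficients of each power of $y_d$ yields, for $0\le j\le l-2$, the recurrence
\[
  (j+2)(j+1)\, p_{j+2} \;=\; -\,\Delta' p_j ,
\]
together with $\Delta' p_{l-1}=0$ and $\Delta' p_{l}=0$ (which will turn out vacuous).

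The conclusion follows by induction on $j$. Starting from $p_0=0$, the recurrence gives $p_2=0$, then $p_4=0$, and so on; starting from $p_1=0$ it gives $p_3=0$, $p_5=0$, etc. Hence $p_j\equiv 0$ for every $j=0,\dots,l$, so $p\equiv 0$ on $T$. The boundary cases $l=0$ (constant) and $l=1$ (affine) are handled directly by $p|_E=0$ and $\partial_n p|_E=0$, and are actually also covered by the above since the expansion has only one or two nontrivial coefficients. There is no genuine obstacle here: once the tangential/normal decomposition is written down, the algebra is automatic; the only point worth double-checking is the bookkeeping that all indices $0\le j\le l$ are reached by the recurrence, which is immediate since the induction steps by $2$ in $j$ starting from both $j=0$ and $j=1$.
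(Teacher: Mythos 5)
Your proof is correct and follows essentially the same route as the paper: kill the first two coefficients in the expansion in powers of the normal variable using $p|_E=\partial_n p|_E=0$, then propagate by the recurrence $(j+2)(j+1)p_{j+2}=-\Delta' p_j$ coming from harmonicity. The only (cosmetic) difference is that you group the tangential variables into the coefficients $p_j(y')$, which makes the argument uniform in the dimension, whereas the paper expands fully in monomials in 2D and remarks that 3D is similar.
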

\begin{proof} Let us consider only the 2D case (3D is similar). Without loss of
generality, we can assume that $E$ lies on on the $x$-axis in $(x, y)$
coordinates. Let $p = \sum p_{{ij}} x^i y^j$ with $i, j \ge 0$, $i + j \le l$ as above. We shall prove by induction on $m = 0, 1, \ldots, l$ that $p_{i m} = 0$, $\forall i$.
Indeed, this is valid for $m=0,1$ since $p (x, 0) = \sum_i p_{i 0} x^i = 0$ and $\frac{\partial p}{\partial y} (x, 0) = \sum_i p_{i 1} x^i = 0$. Now, $\Delta p = 0$ implies for all indices $i, j \ge 0$
\[ (i+2)(i+1)p_{i + 2, j} + (j+2)(j+1)p_{i, j + 2} = 0 \]
so that $p_{i m} = 0$, $\forall i$ implies $p_{i,m+2} = 0$, $\forall i$. 
\end{proof}

Recall the definition of the submesh $\Th^{\Gamma}$ and introduce the correspoinding domain $\Omega_h^{\Gamma}$ (\ref{eq:def ThGamma}).
The following lemma extends a similar result from \cite{lozinski2018} where it was proved for piecewise linear finite elements. 

\begin{lemma}
  \label{LemDir:prop1}
Under Assumption \ref{asm2},   for any $\beta > 0$ and $s\in\mathbb{N}^*$ one can choose $0 < \alpha < 1$
  depending only on the mesh regularity and $s$ such that, for each $v_h\in V_h^{(s)}$,
  \begin{equation}
    |v_h |_{1, \Omega_h^{\Gamma}}^2 \le \alpha |v_h |_{1, \Omega_h}^2 +
    \beta h\sum_{E \in \mathcal{F}_h^{\Gamma}} \left\| \left[ \frac{\partial
    v_h}{\partial n} \right] \right\|_{0, E}^2 + \beta h^2 \sum_{T \in
    \Th^{\Gamma}} \|\Delta v_h\|_{0, T}^2. \label{Dir:prop1}
  \end{equation}
\end{lemma}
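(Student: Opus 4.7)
The strategy is to establish a local estimate on each patch $\Pi_i = T_i \cup \Pi_i^{\Gamma}$ provided by Assumption~\ref{asm2}, then sum over patches and rearrange. The local estimate I would aim for has the form
\[
|v_h|^2_{1,\Pi_i^{\Gamma}} \le C_1 |v_h|^2_{1,T_i} + C_2\Big(h\sum_{E}\|[\partial v_h/\partial n]\|^2_{0,E} + h^2\sum_{T\in\Pi_i^{\Gamma}}\|\Delta v_h\|^2_{0,T}\Big),
\]
where $E$ ranges over the interior facets of $\Pi_i$ and $C_1,C_2>0$ depend only on the mesh regularity, $s$, and $M$. Standard scaling from $\Pi_i$ to a reference patch $\hat\Pi$ of unit size (noting that $|\cdot|^2_{1,T}$, $h\|[\cdot]\|^2_{0,E}$, and $h^2\|\Delta\cdot\|^2_{0,T}$ all scale as $h^{d-2}$) reduces the task to an $h$-independent seminorm equivalence on the finite-dimensional space $V(\hat\Pi)$ of piecewise $\mathbb{P}_s$ functions, between $N_1(\hat v)=|\hat v|_{1,\hat\Pi^{\Gamma}}$ and $N_2(\hat v)^2 = |\hat v|^2_{1,\hat T_i} + \sum_{\hat E}\|[\partial \hat v/\partial n]\|^2_{0,\hat E} + \sum_{\hat T\in\hat\Pi^{\Gamma}}\|\Delta \hat v\|^2_{0,\hat T}$.

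The key step --- and the main obstacle --- is to show that $N_1$ and $N_2$ share the same kernel, namely the constants, so that the finite-dimensional equivalence of seminorms applies. Assume $N_2(\hat v)=0$; then $\hat v=c$ on $\hat T_i$. For any cut element $\hat T\in\hat\Pi^{\Gamma}$ sharing a facet $\hat E$ with $\hat T_i$, continuity of $\hat v\in H^1$ across $\hat E$ yields $\hat v=c$ on $\hat E$ from $\hat T$'s side, the vanishing normal-derivative jump together with $\partial\hat v/\partial n=0$ from $\hat T_i$ forces $\partial\hat v/\partial n=0$ on $\hat E$ from $\hat T$'s side, and $\Delta\hat v=0$ on $\hat T$; Lemma~\ref{lemma:poly} applied to the polynomial $\hat v-c$ on $\hat T$ then yields $\hat v=c$ on $\hat T$. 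Iterating this propagation along the connected patch $\hat\Pi$ shows $\hat v=c$ on all of $\hat\Pi$, hence $N_1(\hat v)=0$. The finitely many combinatorial configurations of the scaled patches (a consequence of mesh quasi-uniformity together with the bound $M$) then deliver uniform constants $C_1,C_2$.

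Finally I would sum the local estimate over the $N_\Pi$ disjoint patches. Using
\[
\sum_i|v_h|^2_{1,T_i}\le|v_h|^2_{1,\Omega_h\setminus\Omega_h^{\Gamma}}=|v_h|^2_{1,\Omega_h}-|v_h|^2_{1,\Omega_h^{\Gamma}},
\]
and observing that every interior facet of each $\Pi_i$ lies in $\mathcal{F}_h^{\Gamma}$ (being a facet of a cut element), moving $|v_h|^2_{1,\Omega_h^{\Gamma}}$ to the left side yields \eqref{Dir:prop1} with $\alpha_0 = C_1/(1+C_1)<1$ and coefficient $\beta_0 = C_2/(1+C_1)$ in front of the penalty terms. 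For a prescribed $\beta\ge\beta_0$ the argument is complete as the penalty coefficient may only be enlarged; for smaller $\beta$ I would convexly interpolate this bound with the trivial $|v_h|^2_{1,\Omega_h^{\Gamma}}\le|v_h|^2_{1,\Omega_h}$ using weight $\theta=1-\beta/\beta_0\in(0,1)$, which produces $\alpha=1-(1-\theta)(1-\alpha_0)<1$ matched to the prescribed $\beta$.
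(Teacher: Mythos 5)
Your proof is correct and follows essentially the same route as the paper's: a patch-wise estimate obtained by a finite-dimensional compactness/norm-equivalence argument whose kernel identification rests on Lemma~\ref{lemma:poly}, followed by summation over the disjoint patches of Assumption~\ref{asm2}. The only (harmless) difference is bookkeeping of $\beta$: the paper builds the prescribed $\beta$ directly into the maximized quotient defining $\alpha$, whereas you first prove a $\beta$-independent local equivalence and then recover small $\beta$ by convex interpolation with the trivial bound $|v_h|^2_{1,\Omega_h^{\Gamma}}\le|v_h|^2_{1,\Omega_h}$ --- both yield an $\alpha<1$ that in fact depends on $\beta$.
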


\begin{proof}
  Choose any $\beta > 0$, consider the decomposition of $\Omega_h^\Gamma$ in element patches $\{\Pi_k \}$ as in Assumption \ref{asm2}, and introduce
  \begin{equation}
    \label{Dir:alph} \alpha := \max_{\Pi_k, v_h \neq 0}  \frac{|v_h |_{1,
    \Pi_k^{\Gamma}}^2 - \beta h \sum_{E \in \mathcal{F}_k} \left\| \left[
    \frac{\partial v_h}{\partial n} \right] \right\|_{0, E}^2
-\beta h^2\sum_{T\subset\Pi_k} \|\Delta v_h\|_{0,T}^2    
    }{|v_h |_{1,\Pi_k}^2},
    \end{equation}
  where the maximum is taken over all the possible configurations of a patch
  $\Pi_k$ allowed by the mesh regularity and over all the piecewise polynomial
  functions on $\Pi_k$ (polynomials of degree $\le s$). The subset $\mathcal{F}_k \subset
  \mathcal{F}_h^{\Gamma}$ gathers the edges internal to $\Pi_k$. Note that the
  quantity under the $\max$ sign in \eqref{Dir:alph} is invariant under the
  scaling transformation $x \mapsto hx$ and is homogeneous with respect to
  $v_h$. Recall also that the patch $\Pi_k$ contains a most $M$ elements. Thus, the maximum is indeed attained since it is taken over a bounded
  set in a finite dimensional space. 
	
	Clearly, $\alpha \le 1$. Supposing
  $\alpha = 1$ would lead to a contradiction. Indeed, if $\alpha = 1$ then we
  can take $\Pi_k$, $v_h$ yielding this maximum and suppose without loss of
  generality $|v_h |_{1, \Pi_k} = 1$. We observe then
  \begin{eqnarray*}
    |v_h |_{1, T_k}^2 + \beta h\sum_{E \in \mathcal{F}_k}  \left\| \left[
    \frac{\partial v_h}{\partial n} \right] \right\|_{0, E}^2 +\beta h^2\sum_{T\subset\Pi_k} \|\Delta v_h\|_{0,T}^2= 0
  \end{eqnarray*}
	since $|v_h |_{1,\Pi_k}^2=|v_h |_{1,T_k}^2+|v_h |_{1,\Pi_k^\Gamma}^2$.
  This implies $v_h = c=const$ on $T_k$, $\left[\frac{\partial v_h}{\partial n} \right] = 0$ on all $E \in
  \mathcal{F}_k$, and $\Delta v_h=0$ on all $T\subset\Pi_k$.
 Thus applying Lemma \ref{lemma:poly} to $v_h-c$, we deduce that $v_h=c$ on  $\Pi_k$, which contradicts $|v_h
  |_{1, \Pi_k} = 1$. 
	
	This proves $\alpha < 1$. We have thus
  \begin{eqnarray*}
    |v_h |_{1, \Pi_k^{\Gamma}}^2 \le \alpha |v_h |_{1, \Pi_k}^2 + \beta 
    h\sum_{E \in \mathcal{F}_k} \left\| \left[ \frac{\partial v_h}{\partial n}
    \right] \right\|_{0, E}^2+\beta h^2\sum_{T\subset\Pi_k} \|\Delta v_h\|_{0,T}^2
  \end{eqnarray*}
  for all $v_h\in V_h$ and all the admissible patches $\Pi_k$. Summing this over $\Pi_k$,
  $k = 1, \ldots, N_{\Pi}$ yields (\ref{Dir:prop1}).
\end{proof}

\begin{lemma}\label{Prelim1}
For all $v_h \in V_h^{(k)}$
\begin{equation}\label{Dir:eq:3}
\left\|\phi_h v_h \right\|_{0,\Omega_h^{\Gamma}}
\le  Ch\left |\phi_h v_h \right|_{1, \Omega_h^{\Gamma}}
\end{equation}  
with a constant $C>0$ depending only on the regularity of $\Th$.
\end{lemma}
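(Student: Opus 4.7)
The plan is to prove the inequality element by element and then sum. By the definition of $\Th^{\Gamma}$, each $T\in\Th^{\Gamma}$ meets $\Gamma_h=\{\phi_h=0\}$, so there exists a point $x_T\in T$ with $\phi_h(x_T)=0$; consequently $(\phi_h v_h)(x_T)=0$. On every such $T$ the function $\phi_h v_h$ is a polynomial of degree at most $k+l$. It therefore suffices to establish a Poincar\'e-type bound
\[ \|p\|_{0,T}\le C\,h_T\,|p|_{1,T} \]
for every polynomial $p$ of degree at most $k+l$ on a shape-regular simplex $T$ of diameter $h_T$ that vanishes at at least one point of $T$, with $C$ independent of $T$, of $p$, and of the location of the zero. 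Squaring, summing over $T\in\Th^{\Gamma}$, and using $h_T\le h$ then yields (\ref{Dir:eq:3}).

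To prove this elementwise Poincar\'e estimate, I would first apply the fundamental theorem of calculus along the segment from $x_T$ to an arbitrary $x\in T$ to obtain $\|p\|_{L^\infty(T)}\le h_T\,\|\nabla p\|_{L^\infty(T)}$. Then, on the finite-dimensional space of polynomials of degree at most $k+l$ on a shape-regular simplex, the classical inverse inequalities
\[ \|\nabla p\|_{L^\infty(T)}\le C\,h_T^{-d/2}\,|p|_{1,T}, \qquad \|p\|_{0,T}\le C\,h_T^{d/2}\,\|p\|_{L^\infty(T)} \]
hold with constants depending only on $k+l$ and the mesh regularity. Chaining these three estimates produces $\|p\|_{0,T}\le C\,h_T\,|p|_{1,T}$.

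The only point that merits attention is that the constant $C$ must not deteriorate when $x_T$ lies close to the boundary of $T$, as can happen whenever $\Gamma_h$ barely clips a corner of $T$. This is automatic here, since the fundamental theorem of calculus step uses only the crude bound $|x-x_T|\le h_T$, valid uniformly in the position of $x_T$; the whole argument is thus insensitive to how $\Gamma_h$ intersects the element, which is really the crux of the estimate.
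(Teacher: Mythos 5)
Your reduction is exactly the one the paper uses: restrict to a single $T\in\Th^{\Gamma}$, observe that $p=\phi_h v_h$ is a polynomial of degree at most $k+l$ vanishing at some point of $T$ because $T$ meets $\Gamma_h$, prove the elementwise Poincar\'e bound $\|p\|_{0,T}\le C h_T|p|_{1,T}$, and sum over the cut elements. Where you diverge is in how that elementwise bound is established. The paper proceeds by a compactness argument: it forms the supremum of $\|p\|_{0,T}/(h_T|p|_{1,T})$ over all admissible shape-regular simplexes and all such polynomials, normalizes by homogeneity and scaling to $h_T=1$ and $\|p\|_{0,T}=1$, checks that the denominator cannot vanish (a polynomial with $|p|_{1,T}=0$ is constant, hence zero since it vanishes somewhere), and concludes that the supremum is attained and finite. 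Your argument is constructive: the fundamental theorem of calculus along the segment from the zero $x_T$ to an arbitrary $x\in T$ (which stays in $T$ by convexity of the simplex) gives $\|p\|_{L^\infty(T)}\le h_T\|\nabla p\|_{L^\infty(T)}$, and the two scaling/inverse estimates you quote, both standard on shape-regular simplexes with constants depending only on $d$, $k+l$, and the regularity parameter, close the chain with the correct powers of $h_T$ ($h_T^{d/2}\cdot h_T\cdot h_T^{-d/2}=h_T$). Both routes are correct and yield uniformity in the location of the zero; yours has the advantage of being explicit (the constant is in principle traceable and no attainment-of-supremum argument is needed), while the paper's is shorter to state and is the same template it reuses for Lemma~\ref{LemDir:prop1}. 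One cosmetic point: your final estimate naturally carries $|p|_{1,T}$ summed over $T\in\Th^{\Gamma}$ on the right, i.e. $|\phi_h v_h|_{1,\Omega_h^\Gamma}$ understood as the broken seminorm, which is exactly what \eqref{Dir:eq:3} asserts, so no issue there.
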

\begin{proof}
Take any $T \in \mathcal{T}^{\Gamma}_h$ and let $p_h = \phi_h v_h$ on $T$.
This is a polynomial in $\mathbb{P}_{k + l}$ vanishing at at least one point
of \ $T$. We want to prove
\begin{equation}
  \label{PoinLoc} \| p_h \|_{0, T} \le Ch_T | p_h |_{1, T}
\end{equation}
with $h_T = \diam(T)$, which would entail (\ref{Dir:eq:3}) by summing
over all $T \in \mathcal{T}^{\Gamma}_h$. To prove (\ref{PoinLoc}), we consider
the following supremum
\begin{equation}
  \label{PoinSup} C = \sup_{p_h \neq 0, T} \frac{\| p_h \|_{0, T}}{h_T | p_h
  |_{1, T}}
\end{equation}
taking over all the polynomials in $\mathbb{P}_{k + l}$ vanishing at a point
of \ $T$ and all the simplexes $T$ satisfying the regularity assumption $h_T /
\rho (T) \ge \beta$. Note that the denominator in (\ref{PoinSup}) never
vanishes if $p_h \neq 0$. Indeed, $| p_h |_{1, T} = 0$ would imply $p_h = 0$
since $p_h$ vanishes at a point. By homogeneity, the supremum in
(\ref{PoinSup}) can be restricted to $p_h$ with $\| p_h \|_{0, T} =1$
and to simplexes $T$ with $h_T = 1$. This supremum is thus taken over a closed
bounded set in a finite dimensional space so that it is attained. This means that $C$
is finite which entails (\ref{PoinLoc}) and (\ref{Dir:eq:3}).
\end{proof}

\begin{remark}\label{RemPrelim1}
Inequality (\ref{Dir:eq:3}) is also valid on $\Omega_h\setminus\Omega$ instead of $\Omega_h^\Gamma$. Typically, we have any way $\Omega_h\setminus\Omega\subset\Omega_h^\Gamma$. But it can happen that the real boundary $\Gamma$ goes slightly outside of $\Omega_h^\Gamma$, which is defined by intersections with $\Gamma_h$. To deal with this situations, we can add more neighbor mesh elements into $\Omega_h^\Gamma$ and prove
\begin{equation}\label{Dir:eq:3alt}
\left\|\phi_h v_h \right\|_{0,\Omega_h\setminus\Omega}
\le  Ch\left |\phi_h v_h \right|_{1, \Omega_h}
\end{equation}  
\end{remark}

\begin{lemma}\label{Prelim2}
For all $v_h \in V_h^{(k)}$
  \begin{equation}
    \sum_{E \in \mathcal{F}_h^{\Gamma}} \|\phi_hv_h \|_{0, E}^2 \le C  h|\phi_hv_h |_{1,\Omega_h}^2 \label{Dir:prop2}
  \end{equation}
and
  \begin{equation}
    \|\phi_hv_h \|_{0, \partial\Omega_h}^2 \le C  h|\phi_hv_h |_{1,\Omega_h}^2 \label{Dir:prop2a}
  \end{equation}
with a constant $C>0$ depending only on the regularity of $\Th$.
\end{lemma}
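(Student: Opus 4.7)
The strategy is to combine the standard discrete trace inequality on a single simplex with the Poincaré-type bound already established in Lemma~\ref{Prelim1}. The observation that drives the argument is that every facet $E \in \mathcal{F}_h^\Gamma$ (and, as we shall see, every facet $E \subset \partial\Omega_h$) is a face of some cut element $T \in \Th^\Gamma$, on which $\phi_h v_h$ is a polynomial of degree at most $k+l$ vanishing at some point (any zero of $\phi_h$ in $T$).

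\textbf{Step 1: Local trace inequality.} For a polynomial $p$ of degree at most $k+l$ on a simplex $T$ of diameter $h_T$ with a face $E$, a standard scaling argument from the reference element gives
\begin{equation*}
\|p\|_{0,E}^2 \le C\bigl(h_T^{-1}\|p\|_{0,T}^2 + h_T\,|p|_{1,T}^2\bigr),
\end{equation*}
with $C$ depending only on the mesh regularity.

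\textbf{Step 2: Localized Poincaré.} For any $T \in \Th^\Gamma$, the polynomial $p_h := \phi_h v_h$ restricted to $T$ vanishes at some point of $T$ (since $T \cap \Gamma_h \neq \varnothing$), so Lemma~\ref{Prelim1} (or, more precisely, its local version~\eqref{PoinLoc}) yields $\|p_h\|_{0,T} \le C h_T |p_h|_{1,T}$. Inserting this into Step~1 gives the key local estimate
\begin{equation*}
\|\phi_h v_h\|_{0,E}^2 \le C\,h_T\,|\phi_h v_h|_{1,T}^2
\qquad\text{for every face $E$ of every $T \in \Th^\Gamma$.}
\end{equation*}

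\textbf{Step 3: Assembling~\eqref{Dir:prop2}.} By the definition of $\mathcal{F}_h^\Gamma$, each facet $E \in \mathcal{F}_h^\Gamma$ is a face of some $T \in \Th^\Gamma$. Summing the local estimate over $E \in \mathcal{F}_h^\Gamma$ and noting that each cut element contributes at most $d+1$ such facets, we obtain
\begin{equation*}
\sum_{E \in \mathcal{F}_h^\Gamma} \|\phi_h v_h\|_{0,E}^2 \le C\,h \sum_{T\in\Th^\Gamma} |\phi_h v_h|_{1,T}^2 \le C\,h\,|\phi_h v_h|_{1,\Omega_h}^2,
\end{equation*}
using quasi-uniformity to replace $h_T$ by $h$.

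\textbf{Step 4: Assembling~\eqref{Dir:prop2a}.} It remains to check that every facet $E \subset \partial\Omega_h$ (we assume $\Omega_h$ does not reach $\partial\mathcal{O}$, which is true for $h$ small enough since $\Omega\Subset\mathcal{O}$) is a face of a cut element. Indeed, such an $E$ is the face of an element $T \in \Th$ whose neighbour $T' \in \Th^{\mathcal{O}}\setminus\Th$ satisfies $\phi_h \ge 0$ on $T'$, hence $\phi_h \ge 0$ on $E$; combined with the existence of a point in $T$ where $\phi_h < 0$, continuity forces $\Gamma_h \cap T \neq \varnothing$, i.e. $T \in \Th^\Gamma$. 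Applying Step~2 with this $T$ and summing over the boundary facets (bounded in number per element) yields~\eqref{Dir:prop2a}.

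The main (mild) obstacle is the geometric verification in Step~4 that boundary facets of $\Omega_h$ sit on cut elements; once that is granted, the rest is a routine combination of the discrete trace inequality with the Poincaré bound of Lemma~\ref{Prelim1}.
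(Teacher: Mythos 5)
Your proof is correct and follows essentially the same route as the paper: the facet trace inequality $\|p\|_{0,E}^2\le C(h^{-1}\|p\|_{0,T}^2+h|p|_{1,T}^2)$ combined with the Poincar\'e-type bound of Lemma~\ref{Prelim1} on cut elements (you apply it elementwise via \eqref{PoinLoc}, the paper applies it globally on $\Omega_h^\Gamma$ via \eqref{Dir:eq:3}, which is the same thing). Your Step 4, checking that every facet of $\partial\Omega_h$ belongs to a cut element, is a correct and welcome piece of explicit bookkeeping that the paper dispatches with ``the proof of \eqref{Dir:prop2a} is similar.''
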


\begin{proof}
  Let $E\in \mathcal{F}_h^{\Gamma}$. Recall the well-known trace inequality
  \begin{equation}\label{eq:trace}
    \|v\|_{0, E}^2 \le C \left( \frac{1}{h} \|v\|_{0, T}^2 + h|v|_{1, T}^2
    \right) %
  \end{equation}
  for each $v\in H^1( E)$.
  Summing   this over all $E \in \mathcal{F}_h^{\Gamma}$ gives
	$$
	  \sum_{E \in \mathcal{F}_h^{\Gamma}} \|\phi_hv_h \|_{0, E}^2 \le C 
		\left( \frac{1}{h} \|\phi_hv_h\|_{0, \Omega_h^{\Gamma}}^2 + h|\phi_hv_h|_{1, \Omega_h^{\Gamma}}^2 \right) 
	$$
	leading, in combination with  (\ref{Dir:eq:3}), to (\ref{Dir:prop2}). The proof of (\ref{Dir:prop2a}) is similar.
\end{proof}

\begin{lemma}\label{Prelim15}
Under Assumption \ref{asm0}, it  holds for all $v \in H^{s} (\Omega_h)$ with integer $1\le s\le k+1$,
$v$ vanishing on  $\Omega$,
\begin{equation}\label{Dir:eq:4}
\left\|v \right\|_{0,\Omega_h\setminus\Omega}
\le  Ch^s\left\|v\right\|_{s, \Omega_h\setminus\Omega}.
\end{equation}  
\end{lemma}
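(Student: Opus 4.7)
The plan is to reduce \eqref{Dir:eq:4} to a one-dimensional Taylor--Poincar\'e estimate in the direction normal to $\Gamma$, by flattening the boundary using the local charts provided by Assumption~\ref{asm0}.

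First, I would show that $\Omega_h\setminus\Omega$ is contained in a thin strip of width $O(h)$ around $\Gamma$. Every element $T\in\Th$ that contributes to $\Omega_h\setminus\Omega$ contains a point $z$ with $\phi_h(z)<0$; the Lagrange interpolation bound $\|\phi-\phi_h\|_{\infty,T}\le Ch^{l+1}$ together with $\phi(z)=\phi_h(z)+(\phi-\phi_h)(z)$ gives $\phi(z)\le Ch^{l+1}$, and since $\diam T\le h$ and $\phi$ is Lipschitz under Assumption~\ref{asm0}, one obtains $\phi\le Ch$ throughout $T$. Thus $\Omega_h\setminus\Omega\subset\{0\le\phi<Ch\}$, and for $h$ small enough this set lies inside $\cup_{i=1}^I \mathcal{O}_i$ because $|\phi|\ge m$ on the complement.

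Next, I would localize via a partition of unity subordinate to $\{\mathcal{O}_i\}$ and pass to the local coordinates $\xi=(\xi',\xi_d)$ with $\xi_d=\phi$. The relevant piece $\mathcal{O}_i\cap(\Omega_h\setminus\Omega)$ becomes a flat strip $\{(\xi',\xi_d):0\le\xi_d<Ch\}$. The vanishing of $v$ on $\Omega$ means that the traces of $v$ from the side $\xi_d<0$ are zero, and by continuity of the trace map in $H^s$ one also has $\partial_{\xi_d}^j \hat v(\xi',0)=0$ for $j=0,1,\ldots,s-1$, where $\hat v$ denotes $v$ read in the new coordinates. Taylor's formula with integral remainder in $\xi_d$ then yields
$$\hat v(\xi',\xi_d)=\frac{1}{(s-1)!}\int_0^{\xi_d}(\xi_d-t)^{s-1}\,\partial_{\xi_d}^s \hat v(\xi',t)\,dt,$$
and a Cauchy--Schwarz argument followed by integration over the strip gives $\|\hat v\|_{0,\mathrm{strip}}\le Ch^s\,\|\partial_{\xi_d}^s \hat v\|_{0,\mathrm{strip}}$. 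Changing coordinates back to $x$ and using the bounds on $\partial^\alpha x/\partial\xi^\alpha$ and $\partial^\alpha \xi/\partial x^\alpha$ up to order $k+1\ge s$ from Assumption~\ref{asm0} converts this into $\|v\|_{0,\mathcal{O}_i\cap(\Omega_h\setminus\Omega)}\le Ch^s\,\|v\|_{s,\mathcal{O}_i\cap(\Omega_h\setminus\Omega)}$; summing over $i$ completes the proof.

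The main obstacle is the pair of geometric/analytic ingredients that drive the argument: (i) the width-$O(h)$ bound on the strip $\Omega_h\setminus\Omega$, which relies on the interplay between the interpolation estimate for $\phi-\phi_h$, the mesh size $h$, and the geometric separation condition $|\phi|\ge m$ off $\Gamma$; and (ii) the justification that the normal derivatives of $\hat v$ of orders $0,\ldots,s-1$ vanish on $\{\xi_d=0\}$, which requires using that the interior trace from $\Omega\cap\Omega_h$ is zero together with the continuity of higher-order traces for $H^s$ functions. Once these two points are settled, the Taylor step and the change of variables are routine.
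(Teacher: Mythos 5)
Your proposal is correct and follows essentially the same route as the paper's proof: flatten $\Gamma$ using the local coordinates $\xi_d=\phi$ from Assumption~\ref{asm0}, use the vanishing of the normal derivatives of order $0,\ldots,s-1$ on $\{\xi_d=0\}$, apply Taylor's formula with integral remainder in $\xi_d$ over the strip of width $O(h)$, and transform back. The only difference is that you spell out two points the paper leaves implicit (the $O(h)$ width of $\Omega_h\setminus\Omega$ via the interpolation error for $\phi-\phi_h$, and the trace argument for the vanishing normal derivatives), both of which are handled correctly.
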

\begin{proof}
Consider the 2D case ($d = 2$). For simplicity, we can assume that $v$ is
$C^{\infty}$ regular and pass to $v \in H^s (\Omega_h)$ by density. By
Assumption \ref{asm0}, we can pass to the local coordinates $\xi_1, \xi_2$ on
every set $\mathcal{O}_k$ covering $\Gamma$ assuming that $\xi_1$ varies
between $0$ and $L$ and, for any $\xi_1$ fixed, $\xi_2$ varies on $\Omega_h
\setminus \Omega$ from 0 to some $b (\xi_1)$ with $0 \le b
(\xi_1) \le Ch$. \ We observe using the bounds on the mapping $(x_1, x_2)
\mapsto (\xi_1, \xi_2)$
\begin{align*}
  \|v\|_{0, (\Omega_h \setminus \Omega) \cap \mathcal{O}_k}^2 & \le C \int_0^L
  \int_0^{b (\xi_1)} v^2 (\xi_1, \xi_2) d \xi_2 d \xi_1\\
  & \qquad (\text{recall that } \frac{\partial^{\alpha} v}{\partial
  \xi_2^{\alpha}} (\xi_1, 0) = 0 \text{ for } \alpha = 0, \ldots, s \text{-1 and
  } b \le C_{} h)\\
  & = C \int_0^L \int_0^{b (\xi_1)} \left( \int_0^{\xi_2} \frac{(\xi_2 -
  t)^{s - 1}}{(s - 1) !} \frac{\partial^s v}{\partial \xi_2^s} (\xi_1, t) dt
  \right)^2 d \xi_2 d \xi_1\\
  & \le C \int_0^L h^{2 s}  \int_0^{b (\xi_1)} \left| \frac{\partial^s
  v}{\partial \xi_2^s} (\xi_1, t) \right|^2 dtd\xi_1\\
  & \le Ch^{2 s}  | v |^2_{s, (\Omega_h \setminus \Omega) \cap \mathcal{O}_k}.
\end{align*}
Summing over all neighbourhoods $\mathcal{O}_k$ gives (\ref{Dir:eq:4}). The
proof in the 3D case is the same up to the change of notations.
$\left.\right.$
\end{proof}

\subsection{Coercivity of the bilinear form $a_h$}

\begin{lemma}\label{lemma:coer}
  Under  Assumption \ref{asm2},  the bilinear form $a_h$ 
  is coercive on $V_h^{(k)}$ with respect to the norm 
  \[\interleave v_h\interleave_h:=\sqrt{| \phi_h v_h |_{1, \Omega_h}^2 + G_h
  (v_h, v_h)}\]
	i.e. $a_h(v_h,v_h)\ge c\interleave v_h\interleave_h^2$ for all $v_h\in V_h^{(k)}$ with $c>0$ depending only on the mesh regularity and on the constants in Assumption \ref{asm2}.
\end{lemma}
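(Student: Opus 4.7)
The only term in $a_h(v_h,v_h)$ without a definite sign is the boundary integral
\[
I := \int_{\partial\Omega_h} \frac{\partial}{\partial n}(\phi_h v_h)\,\phi_h v_h.
\]
My strategy is to control $|I|$ by $\tfrac12|\phi_h v_h|_{1,\Omega_h}^2$ plus a fraction of $G_h(v_h,v_h)$, using the ghost penalty through Lemma~\ref{LemDir:prop1}. Since $a_h(v_h,v_h) = |\phi_h v_h|_{1,\Omega_h}^2 - I + G_h(v_h,v_h)$, coercivity will then follow.

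First I apply Cauchy--Schwarz to get $|I| \le \|\tfrac{\partial}{\partial n}(\phi_h v_h)\|_{0,\partial\Omega_h}\,\|\phi_h v_h\|_{0,\partial\Omega_h}$. The second factor is already handled by \eqref{Dir:prop2a}, which yields $\|\phi_h v_h\|_{0,\partial\Omega_h}^2 \le C h |\phi_h v_h|_{1,\Omega_h}^2$. For the first factor I note that $\partial\Omega_h$ (minus the irrelevant part on $\partial\mathcal{O}$) is made of facets of elements in $\Th^\Gamma$, so a standard trace inequality~\eqref{eq:trace} applied face-by-face combined with the polynomial inverse estimate $|\phi_h v_h|_{2,T}\le Ch^{-1}|\phi_h v_h|_{1,T}$ (legitimate because $\phi_h v_h|_T\in\mathbb{P}_{k+l}(T)$) gives
\[
\left\|\tfrac{\partial}{\partial n}(\phi_h v_h)\right\|_{0,\partial\Omega_h}^2 \le \frac{C}{h}|\phi_h v_h|_{1,\Omega_h^\Gamma}^2.
\]
Multiplying the two bounds yields $|I|\le C_1|\phi_h v_h|_{1,\Omega_h^\Gamma}\,|\phi_h v_h|_{1,\Omega_h}$ with $C_1$ depending only on the mesh regularity and on $C_0$ through a bound on $\phi_h$.

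Next I apply Young's inequality to obtain $|I|\le \tfrac14 |\phi_h v_h|_{1,\Omega_h}^2 + C_1^2 |\phi_h v_h|_{1,\Omega_h^\Gamma}^2$. The key observation is that $\phi_h v_h$ is a continuous piecewise polynomial of degree $k+l$, hence lies in $V_h^{(k+l)}$, so Lemma~\ref{LemDir:prop1} applies with $s=k+l$. I pick the parameter $\beta$ in that lemma first so that the resulting $\alpha=\alpha(\beta)$ satisfies $\alpha\le \tfrac{1}{4 C_1^2}$ (such a $\beta$ exists, since enlarging $\beta$ in the definition~\eqref{Dir:alph} can only decrease $\alpha$). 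The lemma then gives
\[
C_1^2|\phi_h v_h|_{1,\Omega_h^\Gamma}^2 \le \tfrac14 |\phi_h v_h|_{1,\Omega_h}^2 + \frac{C_1^2\beta}{\sigma}\,G_h(v_h,v_h).
\]

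Combining the two estimates produces
\[
|I|\le \tfrac12 |\phi_h v_h|_{1,\Omega_h}^2 + \frac{C_1^2\beta}{\sigma}\,G_h(v_h,v_h),
\]
and substituting into $a_h(v_h,v_h) = |\phi_h v_h|_{1,\Omega_h}^2 - I + G_h(v_h,v_h)$ yields
\[
a_h(v_h,v_h) \ge \tfrac12 |\phi_h v_h|_{1,\Omega_h}^2 + \left(1 - \frac{C_1^2\beta}{\sigma}\right) G_h(v_h,v_h),
\]
which is bounded below by a positive multiple of $\interleave v_h\interleave_h^2$ as soon as $\sigma$ exceeds the threshold $C_1^2\beta$ (which depends only on the mesh regularity and on the constants in Assumption~\ref{asm2}). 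The main technical obstacle is the interplay between the Young parameter and the $\beta$-dependence in Lemma~\ref{LemDir:prop1}: one must fix $\beta$ \emph{after} knowing $C_1$, so that $\alpha(\beta)C_1^2$ is small enough to be absorbed, and only then is the lower bound on $\sigma$ determined.
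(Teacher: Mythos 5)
Your overall architecture (absorb the boundary term using Lemma~\ref{LemDir:prop1} and the ghost penalty) is the right one, but there is a genuine gap at the step you yourself flag as the main obstacle. You need $\alpha(\beta)\le \tfrac{1}{4C_1^2}$ and justify this by saying that $\alpha(\beta)$ is non-increasing in $\beta$. Monotonicity is true, but $\alpha(\beta)$ does \emph{not} tend to zero as $\beta\to\infty$: it is bounded below, uniformly in $\beta$, by
\[
\sup\Big\{\,|v_h|_{1,\Pi_k^\Gamma}^2\,/\,|v_h|_{1,\Pi_k}^2 \;:\; [\partial v_h/\partial n]=0 \text{ on all } E\in\mathcal{F}_k,\ \Delta v_h=0 \text{ on all } T\subset\Pi_k,\ v_h\not\equiv \mathrm{const}\Big\},
\]
since for such $v_h$ the $\beta$-terms in \eqref{Dir:alph} vanish identically. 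Taking $v_h$ affine on the patch shows this lower bound is at least $|\Pi_i^\Gamma|/|\Pi_i|$, which for a patch with $M$ cut elements of size comparable to $T_i$ is of order $M/(M+1)$ — close to $1$, certainly not below $\tfrac{1}{4C_1^2}$ for any constant $C_1\ge 1$ coming from trace and inverse inequalities. Redistributing the Young weights does not help: after Cauchy--Schwarz your boundary term is controlled by $\varepsilon|\phi_hv_h|_{1,\Omega_h}^2+\tfrac{C_1^2}{4\varepsilon}|\phi_hv_h|_{1,\Omega_h^\Gamma}^2$, and absorbing requires $\tfrac{C_1^2}{4\varepsilon}\alpha<1-\varepsilon$, i.e. essentially $C_1^2<1$ since $\alpha$ is close to $1$. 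Lemma~\ref{LemDir:prop1} only delivers $\alpha<1$, not $\alpha$ small.

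The paper avoids this by never applying Cauchy--Schwarz to the boundary integral. Instead it uses that $\phi_hv_h=0$ on $\Gamma_h$ to replace $\int_{\partial\Omega_h}\partial_n(\phi_hv_h)\,\phi_hv_h$ by $\int_{\partial B_h}\partial_n(\phi_hv_h)\,\phi_hv_h$ with $B_h=\{\phi_h>0\}\cap\Omega_h$, and then integrates by parts elementwise on the strip to obtain the \emph{exact identity}
\[
\int_{\partial\Omega_h}\frac{\partial(\phi_hv_h)}{\partial n}\,\phi_hv_h=\int_{B_h}|\nabla(\phi_hv_h)|^2+\int_{B_h}\Delta(\phi_hv_h)\,\phi_hv_h-\sum_{E\in\mathcal{F}_h^\Gamma}\int_{E\cap B_h}\phi_hv_h\left[\frac{\partial(\phi_hv_h)}{\partial n}\right].
\]
The dangerous gradient-on-the-strip term thus enters with coefficient exactly $1$, so $\alpha<1$ from Lemma~\ref{LemDir:prop1} suffices, while the remaining two terms carry an extra factor of $h$ (via Lemmas~\ref{Prelim1} and~\ref{Prelim2}) and are absorbed by the stabilization for $\sigma$ large. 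To repair your argument you would need to reproduce this identity (or an equivalent sharp estimate with constant $1$ in front of $|\phi_hv_h|_{1,\Omega_h^\Gamma}^2$); as written, the absorption step fails.
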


\begin{proof}
  Let $v_h \in V_h^{(k)}$ and $B_h$ be the strip between $\Gamma_h$ and $\partial \Omega_h$, \textit{i.e.} $B_h=\{\phi_h>0\}\cap \Omega_h$. Since $\phi_h v_h=0$ on $\Gamma_h$, 
	\begin{multline*} 
	\int_{\partial\Omega_h} \frac{\partial (\phi_h v_h)}{\partial n} \phi_h v_h 
	= \int_{\partial B_h}  \frac{\partial
 (\phi_h  v_h)}{\partial n} \phi_h v_h\\ = \sum_{T \in \mathcal{T}_h^{\Gamma}} \int_{
  \partial (B_h \cap T)}  \frac{\partial(\phi_h v_h)}{\partial n}\phi_h v_h
	- \sum_{T \in \mathcal{T}_h^{\Gamma}} \sum_{E \in \mathcal{F}_h^{cut}(T)} \int_{
  B_h\cap E}  \frac{\partial(\phi_h v_h)}{\partial n}\phi_h v_h,
	\end{multline*}
	where $\mathcal{T}_h^{\Gamma}$ is  defined in \eqref{eq:def ThGamma} and $\mathcal{F}_h^{cut}(T)$ regroups the facets of a mesh element $T$ cut by $\Gamma_h$. By divergence theorem,
	\begin{multline*}
\int_{\partial\Omega_h} \frac{\partial (\phi_h v_h)}{\partial n} \phi_h v_h 
   =\sum_{T \in \mathcal{T}_h^{\Gamma}} \int_{ B_h \cap T} |   \nabla(\phi_h v_h) |^2+\sum_{T \in
   \mathcal{T}_h^{\Gamma}}\int_{B_h\cap T}   \Delta(\phi_h v_h)\phi_h v_h\\
  - \sum_{E \in \mathcal{F}_h^{\Gamma}} \int_{E \cap B_h}\phi_h v_h
   \left[ \frac{\partial \phi_h v_h}{\partial n} \right] .
	\end{multline*} 
	Substituting this into the definition of $a_h$ yields
 \begin{multline}\label{ahvhvh}
    a_h (v_h, v_h) = \int_{\Omega_h} | \nabla(\phi_h v_h) |^2 - \sum_{T \in \mathcal{T}_h^{\Gamma}} \int_{\partial
   B_h \cap T} | \nabla
  (\phi_h  v_h) |^2-\displaystyle\sum_{T \in
   \mathcal{T}_h^{\Gamma}}\displaystyle\int_{B_h\cap T}   \Delta(\phi_h v_h)\phi_h v_h\\
   + \sum_{F \in \mathcal{F}_h^{\Gamma}} \int_{F \cap B_h} \phi_hv_h \left[
    \frac{\partial (\phi_h v_h)}{\partial n} \right]
    + \sigma h^2 \sum_{T \in\mathcal{T}_h^{\Gamma}} \int_T | \Delta(\phi v_h)|^2  
     +  \sigma h\sum_{E \in \mathcal{F}_h^{\Gamma}} \int_E \left[
    \frac{\partial (\phi_h v_h)}{\partial n} \right]^2.
  \end{multline}
Since   $B_h \subset \Omega_h^{\Gamma}$ (cf. \eqref{eq:def ThGamma}), 
applying Lemma \ref{LemDir:prop1} to $\phi_hv_h\in V_h^{(k+l)}$ gives
 \begin{multline*}
\sum_{T \in \mathcal{T}_h^{\Gamma}} \int_{\partial
   B_h \cap T} | \nabla  (\phi_h  v_h) |^2\le 
\alpha\int_{\Omega_h} | \nabla  (\phi_h  v_h) |^2 
+   \beta h\sum_{E \in \mathcal{F}_h^{\Gamma}} \int_E \left[
    \frac{\partial (\phi_h v_h)}{\partial n} \right]^2\\
    +\beta h^2 \sum_{T \in\mathcal{T}_h^{\Gamma}} \int_T | \Delta(\phi_h v_h)|^2  .
  \end{multline*}
Moreover, by Young inequality, \eqref{Dir:eq:3} and \eqref{Dir:prop2}, we obtain for any $\varepsilon>0$
 \[
\sum_{T \in
   \mathcal{T}_h^{\Gamma}}\displaystyle\int_{B_h\cap T}  \Delta(\phi_h v_h)\phi_h v_h
\le \frac{h^2}{2\varepsilon} \sum_{T \in\mathcal{T}_h^{\Gamma}} \int_T | \Delta(\phi_h v_h)|^2
+C\varepsilon  \int_{\Omega_h} |\nabla(\phi_h v_h)|^2
\]
and 
 \[
\sum_{F \in \mathcal{F}_h^{\Gamma}} \int_{F \cap B_h} \phi_hv_h \left[
    \frac{\partial (\phi_h v_h)}{\partial n} \right]
 \le \frac{h}{2\varepsilon} \sum_{E \in \mathcal{F}_h^{\Gamma}} \int_E \left[
    \frac{\partial (\phi_h v_h)}{\partial n} \right]^2
    +C\varepsilon    |\nabla(\phi v_h)|^2. 
 \]
Thus, putting the last 3 bounds into (\ref{ahvhvh}) we arrive at
  \begin{multline*}
    a (v_h, v_h) \ge \left( 1 - \alpha - C\varepsilon  \right) |\phi_h v_h
    |_{1, \Omega_h}^2 \\  
    + \left( \sigma - \beta - \frac{1}{2 \varepsilon}
    \right) h \sum_{E \in \mathcal{F}_h^{\Gamma}}\left\| \left[ \frac{\partial
   (\phi_h v_h)}{\partial n} \right] \right\|_{0, E}^2
    + \left( \sigma - \beta - \frac{1}{2 \varepsilon}
    \right) h^2 \sum_{T \in\mathcal{T}_h^{\Gamma}} \int_T | \Delta(\phi v_h)|^2   .
  \end{multline*}
This leads to the conclusion  taking $\varepsilon$ sufficiently small and $\sigma$ sufficiently big.
\end{proof}

\subsection{Proof of the $H^1$ error estimate in Theorem \ref{th:error}}

  Since $f\in H^{k}(\Omega)$, the solution $u$ of (\ref{eq:poisson}) belongs to $H^{k + 2}
  (\Omega)$ (see {\cite[p. 323]{evans}}) and can be extended by a function
  $\tilde{u}$ in $H^{k + 2} (\mathcal{O})$, cf. {\cite[p. 257]{evans}}, \ such
  that $\tilde{u} = u$ on $\Omega$ and
  \begin{equation}\label{ExtEst1}
	 \| \tilde{u} \|_{k + 2, \Omega_h} \le \| \tilde{u} \|_{k + 2,
     \mathcal{O}} \le C \| u \|_{k + 2, \Omega} \le C \|f\|_{k,
     \Omega} . 
	\end{equation}	
  Let $w = \tilde u / \phi$. By Lemma \ref{lemma:hardy},
  \begin{equation}\label{ExtEst2}
   | w |_{k + 1, \Omega_h} \le C \|u\|_{k + 2, \mathcal{O}} \le
     C \|f\|_{k, \Omega} . 
	\end{equation}
  Introduce the bilinear form $\bar{a}_h$, similar to $a_h$ as defined in
  (\ref{ah}) but with $\phi$ instead of $\phi_h$ multiplying the trial function:
  \begin{align*}
	  \bar{a}_h (w, v) &= \int_{\Omega_h} \nabla (\phi w) \cdot \nabla
     (\phi_h v) - \int_{\partial \Omega_h} \frac{\partial}{\partial n} 
     (\phi w) \phi_h v \\
		&+ \sigma h \sum_{E \in \mathcal{F}_h^{\Gamma}}
     \int \left[ \frac{\partial}{\partial n} (\phi w) \right]  \left[
     \frac{\partial}{\partial n} (\phi_h v) \right] + \sigma h^2  \sum_{T
     \in \mathcal{T}_h} \int_T \Delta (\phi w) \Delta (\phi_h v) .
	\end{align*}
  Since $\phi w = \tilde{u} \in H^2 (\Omega_h)$, an integration by parts
  yields
  \[
    \bar{a}_h (w, v_h) = \int_{\Omega_h} \tilde{f} \phi_h v_h - \sigma h^2 
    \sum_{T \in \mathcal{T}_h^\Gamma} \int_T \tilde{f} \Delta (\phi_h v_h), \quad
    \forall v_h \in V_h
  \]  
  with $\tilde{f} = - \Delta \tilde{u}$ on $\Omega_h$. Hence,
  \begin{equation}\label{GalOrt}
    a_h (w_h, v_h) - \bar{a}_h (w, v_h) = \int_{\Omega_h} (f - \tilde{f})
     \phi_h v_h - \sigma h^2  \sum_{T \in \mathcal{T}_h^\Gamma} \int_T (f -
     \tilde{f}) \Delta (\phi_h v_h) .
  \end{equation}
  Put $v_h = w_h - I_h w$. The last equality can be rewritten as
    \begin{align*}
     a_h (v_h, v_h) &= \bar{a}_h (w, v_h) - a_h (I_h w, v_h) \\
     &\quad + \int_{\Omega_h} (f - \tilde{f}) \phi_h v_h 
		   - \sigma h^2  \sum_{T \in \mathcal{T}_h^\Gamma}
        \int_T (f - \tilde{f}) \Delta (\phi_h v_h) \\
     &= \int_{\Omega_h} \nabla (\phi w - \phi_h I_hw ) \cdot \nabla
    (\phi_h v_h) - \int_{\partial \Omega_h} \frac{\partial}{\partial n} 
    (\phi_h w - \phi I_hw ) \phi_h v_h\\
    & \quad+ \sigma h \sum_{E \in \mathcal{F}_h^{\Gamma}} \int \left[
    \frac{\partial}{\partial n} (\phi w - \phi_h I_hw ) \right] 
    \left[ \frac{\partial}{\partial n} (\phi_h v_h) \right]\\
    &\quad + \sigma h^2 \sum_{T \in \mathcal{T}_h^{\Gamma}} \int_T \Delta (\phi w - \phi_h
    I_hw ) \Delta (\phi_h v_h)\\
    & \quad+ \int_{\Omega_h} (f - \tilde{f}) \phi_h v_h - \sigma h^2  \sum_{T \in
    \mathcal{T}_h} \int_T (f - \tilde{f}) \Delta (\phi_h v_h) .
  \end{align*}
    By Lemma \ref{lemma:coer}, Young inequality, and recalling $f = \tilde{f}$
  on $\Omega$, we now get
    \begin{align*}
    c \interleave v_h \interleave_h^2 &\le \frac{1}{2 \varepsilon}  |
    \phi w - \phi_h I_hw  |_{1, \Omega_h}^2  + \frac{h}{2 \varepsilon}  \left\|
    \frac{\partial}{\partial n} (\phi w - \phi_h I_hw ) \right\|_{0,
    \partial \Omega_h}^2 \\
    &+ \frac{\sigma^2 h}{2 \varepsilon}  \sum_{E \in \mathcal{F}_h^{\Gamma}}
    \left\|\left[ \frac{\partial}{\partial n} (\phi_h w - \phi I_hw )
    \right]\right\|_{0, E}^2
    +\frac{\sigma^2 h^2}{2 \varepsilon}  \sum_{T \in \mathcal{T}_h^{\Gamma}} \|
    \Delta (\phi_h w - \phi I_hw ) \|_{0, T}^2 \\
    &+ \frac{(1 +\sigma^2) h^2}{2 \varepsilon}  \|f - \tilde{f} \|_{0, \Omega_h \setminus
    \Omega}^2	\\
		&+  \frac{\varepsilon}{2}  \left( | \phi_h v_h |_{1, \Omega_h}^2 +\frac{1}{h} \| \phi_h v_h \|_{0, \partial \Omega_h}^2
		+ h \sum_{E \in\mathcal{F}_h^{\Gamma}} \left\|\left[ \frac{\partial}{\partial n} (\phi_h v_h)\right]
\right\|_{0, E}^2 \right.\\
    &\qquad \left. + 2 h^2  \sum_{T \in \mathcal{T}_h^{\Gamma}} \| \Delta
(\phi_h v_h) \|_{0, T}^2 + \frac{1}{h^2} \| \phi_h v_h \|_{0, \Omega_h
\setminus \Omega}^2 \right). 
  \end{align*}  
  We now show how to absorb the term with a coefficient $\varepsilon$ by the
  left-hand side. The first contribution $| \phi_h v_h |_{1, \Omega_h}$ and the sums over $\mathcal{F}_h^{\Gamma}$ and $\mathcal{T}_h^{\Gamma}$ are evidently controlled by $\interleave v_h \interleave_h$. Remark \ref{RemPrelim1} and Lemma \ref{Prelim2} give
  \[ 
	  \| \phi_h v_h \|_{0, \Omega_h\setminus \Omega} \le Ch | \phi_h v_h|_{1, \Omega_h}  \]
and  
  \[
    \| \phi_h v_h \|_{0, \partial \Omega_h}  \le C \sqrt{h}  | \phi_h v_h |_{1, \Omega_h}
  \]
so that these terms are also controlled by $\interleave v_h \interleave_h$. Taking $\varepsilon$ small enough, we conclude
    \begin{multline}\label{vhCea}
  \interleave v_h \interleave_h \le C \left( | \phi w - \phi_h
    I_hw  |_{1, \Omega_h}^2 + h \left\| \frac{\partial}{\partial n} (\phi
    w - \phi_h I_hw ) \right\|_{0, \partial \Omega_h}^2 \hspace*{3cm}\right.\\\left.\displaystyle+ h^2  \sum_{T \in
    \mathcal{T}_h^{\Gamma}} \| \Delta (\phi_h w - \phi I_hw ) \|_{0,
    T}^2 
    +  h \sum_{E \in \mathcal{F}_h^{\Gamma}} \left\|
    \frac{\partial}{\partial n} (\phi_h w - \phi I_hw ) \right\|_{0,
    E}^2 \right.\\\left.+ h^2 \|f - \tilde{f} \|_{0, \Omega_h \setminus \Omega}^2 \right) ^\frac{1}{2}.
 \end{multline}
      We now estimate each term in the right-hand side of (\ref{vhCea}). 
  By triangular inequality,
    \begin{align*}
    | \phi w - \phi_h I_hw  |_{1, \Omega_h} & \le | (\phi -
    \phi_h) w|_{1, \Omega_h} + | \phi_h (w - I_h w) |_{1, \Omega_h} \\
   &\le \| \nabla (\phi - \phi_h) \|_{L^{\infty} (\Omega_h)} \| w
     \|_{0, \Omega_h} + \| \phi - \phi_h \|_{L^{\infty} (\Omega_h)}
     |w|_{1, \Omega_h} \\
		&\quad + \| \nabla \phi_h  \|_{L^{\infty} (\Omega_h)} \| w
     - I_h w \|_{0, \Omega_h} + \| \phi_h \|_{L^{\infty} (\Omega_h)} |w -
     I_h w|_{1, \Omega_h} .
  \end{align*}
  We continue using the classical  interpolation bounds (see for instance {\cite{brenner2007mathematical})
  \begin{align*} 
	| \phi w - \phi_h I_hw  |_{1, \Omega_h} &\le Ch^k (| \phi
     |_{W^{k + 1, \infty} (\Omega_h)} \| w \|_{0, \Omega_h} + | \phi
     |_{W^{k, \infty} (\Omega_h)} |w|_{1, \Omega_h} \\
		& \quad + | \phi |_{W^{1,
     \infty} (\Omega_h)} |w|_{k, \Omega_h} + \| \phi \|_{L^{\infty}
     (\Omega_h)} |w|_{k + 1, \Omega_h}) \\
    & \le Ch^k \| \phi \|_{W^{k + 1, \infty} (\Omega_h)} \|w\|_{k + 1,\Omega_h} .
  \end{align*}
  Similarly, 
  \[ \left( \sum_{T \in \mathcal{T}_h} | \phi w - \phi_h I_hw  |_{2,
     T}^2 \right)^{\frac{1}{2}} \le Ch^{k - 1} \| \phi \|_{W^{k + 1,
     \infty} (\Omega_h)} \|w\|_{k + 1, \Omega_h} .\]
  Combining this with the trace inequality (\ref{eq:trace}), we conclude
   \begin{align*}
    \left\| \frac{\partial}{\partial n} (\phi_h w - \phi I_hw )
    \right\|_{0, \partial \Omega_h}^2 &+ \sum_{E \in \mathcal{F}_h^{\Gamma}}
    \left\| \left[ \frac{\partial}{\partial n} (\phi_h w - \phi I_hw )
    \right] \right\|_{0, E}^2\\
    & \le C \left( \frac{1}{h} \sum_{T \in \mathcal{T}_h^\Gamma} | \phi_h w
    - \phi I_hw  |_{1, T}^2 + h \sum_{T \in \mathcal{T}_h^\Gamma} |
    \phi_h w - \phi I_hw  |_{2, T}^2 \right)\\
    & \le Ch^{2 k - 1} \| \phi \|^2_{W^{k + 1, \infty} (\Omega_h)}
    \|w\|_{k + 1, \Omega_h}^2 .
  \end{align*}
  Finally, we get by Lemma \ref{Prelim15} applied to $f - \tilde{f}$ which vanishes on $\Omega$,
  \begin{equation}	\label{ftildef}
	\|f - \tilde{f} \|_{0, \Omega_h \setminus \Omega} \le Ch^{k - 1} 
     \|f - \tilde{f} \|_{k - 1, \Omega_h \setminus \Omega} \le Ch^{k -
     1}  (\|f\|_{k - 1, \Omega_h} + \| \tilde{u} \|_{k + 1, \Omega_h}) 
	\end{equation} 	
	since $\tilde{f}=-\Delta\tilde{u}$.
	
  Putting all these bounds into (\ref{vhCea}), we get
	\begin{equation}\label{triplevh}
	 | \phi_h (w_h - I_h w) |_{1, \Omega_h} \le \interleave v_h
     \interleave_h  
		 \le Ch^k ( \|w\|_{k + 1, \Omega_h} +\|f\|_{k - 1, \Omega_h} + \| \tilde{u} \|_{k + 1, \Omega_h}) . 
  \end{equation}
	We have absorbed $\| \phi \|_{W^{k + 1, \infty} (\Omega_h)}$ into the constant $C$ in the bound above. Indeed, the constants denoted by $C$ in this proof are allowed to depend on the constants from Assumption \ref{asm0}, which bound in particular $\| \phi \|_{W^{k + 1, \infty} (\Omega_h)}$. We shall follow the same convention on constants $C$ until the end of this proof. 
	
  By triangle inequality and interpolation bounds, 
  \begin{multline*} |u - \phi_h w_h |_{1, \Omega \cap \Omega_h} \le | \tilde{u} -
     \phi_h w_h |_{1, \Omega_h} \\ \le | (\phi - \phi_h) w|_{1,
     \Omega_h} + | \phi_h (w - I_h w) |_{1, \Omega_h} + | \phi_h (I_h w
     - w_h) |_{1, \Omega_h} \\
   \le Ch^k ( \|w\|_{k + 1, \Omega_h} +\|f\|_{k - 1,
     \Omega_h} + \| \tilde{u} \|_{k + 1, \Omega_h}) . 
	\end{multline*}
  We have thus proven (\ref{H1err}) taking into account the bounds (\ref{ExtEst1}) and (\ref{ExtEst2}).
	
\subsection{Proof of the $L^2$ error estimate in Theorem \ref{th:error}}
Let $z \in H^3 (\Omega)$ be solution to
\[ \left\{ \begin{array}{cl}
     - \Delta z = u - u_h & \text{in } \Omega,\\
     z = 0 & \text{on } \Gamma .
   \end{array} \right. \]
Extend it to $\Omega_h$ by $\tilde{z} \in H^3 (\Omega_h)$ using an extension
operator bounded in the $H^3$ norm. Set $ y  = \tilde{z} / \phi$. Then
\begin{equation}\label{L2y2}
  | y  |_{2, \Omega_h} \le C | \tilde{z} |_{3, \Omega_h} \le C \|u
- u_h \|_{1, \Omega}
\end{equation}  
thanks to Lemma \ref{lemma:hardy} and to the elliptic
regularity estimate. We also have 
\begin{equation}\label{L2y1}
 \|  y  \|_{1, \Omega_h} \le C \|\tilde{z} \|_{2, \Omega_h} \le C \|u - u_h \|_{0, \Omega}.
\end{equation}  

By Lemma 3.1 from \cite{lozinski2018}, we have for any $v\in H^1(\Omega_h^\Gamma)$
\begin{equation}
  \label{eq: zbound0} 
	\|v\|_{0, \Omega_h^{\Gamma}} \le C \left(
  \sqrt{h}  \| v \|_{0, \Gamma} + h | v |_{1,\Omega_h^{\Gamma}} \right) .
\end{equation}
This is valid since $\Omega_h^{\Gamma}$ is a band of thickness $\sim h$ around $\Gamma$. Note that the same estimate also holds for $\|v\|_{\Omega_h\setminus\Omega}$ (typically $\Omega_h\setminus\Omega\subset\Omega_h^\Gamma$, but even if it is not the case, $\Omega_h\setminus\Omega$ is still a band of thickness $\sim h$). In the case $v=\tilde{z}$, (\ref{eq: zbound0}) gives
\begin{equation}
  \label{eq: zbound1} \| \tilde{z} \|_{0, \Omega_h^{\Gamma}} 
  \le Ch |
  \tilde{z} |_{1, \Omega_h^{\Gamma}} \le Ch \|u - u_h \|_{0, \Omega}
\end{equation}
and, in the case $v=\nabla\tilde{z}$,
\begin{equation}
  \label{eq: zbound2} | \tilde{z} |_{1, \Omega_h^{\Gamma}} \le C \left(
  \sqrt{h}  \| \nabla \tilde{z} \|_{0, \Gamma} + h | \tilde{z} |_{2,
  \Omega_h^{\Gamma}} \right) \le C \sqrt{h}  \| \tilde{z} \|_{2, \Omega_h} \le
  C \sqrt{h}  \|u - u_h \|_{0, \Omega}.
\end{equation}

By integration by parts,
\begin{equation}
  \label{L2:1} \|u - u_h \|_{0, \Omega}^2 = \int_{\Omega_{}}  (u - u_h) (-
  \Delta z) = - \int_{\Gamma} (u - u_h)  \frac{\partial z}{\partial n} +
  \int_{\Omega_{}} \nabla (u - u_h) \cdot \nabla z.
\end{equation}
To treat the first term in (\ref{L2:1}), we remark first
\[ \int_{\Gamma} (u - u_h)  \frac{\partial z}{\partial n} \le \|u - u_h
   \|_{0, \Gamma} \left\| \frac{\partial z}{\partial n} \right\|_{0, \Gamma}
   \le C \|u - u_h \|_{0, \Gamma} \| u - u_h \|_{0, \Omega} . \]
Furthermore, since the distance between $\Gamma$ and $\Gamma_h$ is of order at
least $h^{k + 1}$, we have 
\begin{align*} \|u - u_h \|_{0, \Gamma} &\le C (\| \tilde{u} - u_h \|_{0, \Gamma_h} +
   h^{(k + 1) / 2} | \tilde{u} - u_h |_{1, \Omega_h}) \\
 &\qquad (\text{recalling } \tilde{u} = \phi w \text{ and } \phi_h = u_h = 0 \text{ on }
   \Gamma_h) \\
 & = C (\| (\phi - \phi_h) w\|_{0, \Gamma_h} + h^{(k + 1) / 2} | \tilde{u} -
   u_h |_{1, \Omega_h}) \\
 & \le C (h^{k + 1} \|w\|_{0, \Gamma_h} + h^{(k + 1) / 2 + k} \| f
   \|_{k, \Omega_h}). 
\end{align*}
We have used here the already proven bound on $| \tilde{u} - u_h |_{1,
\Omega_h}$ and the interpolation error bound for $\phi - \phi_h .$ We have
thus thanks to Lemma \ref{lemma:hardy},
\begin{multline*} \|u - u_h \|_{0, \Gamma} \le Ch^{k + 1} (\|w\|_{1, \Omega_h} + \| f \|_{k, \Omega_h}) \le Ch^{k + 1} (\| \tilde{z} \|_{2, \Omega_h} + \|
   f \|_{k, \Omega_h})\\ \le Ch^{k + 1} (\|u - u_h \|_{0, \Omega_{}} + \|
   f \|_{k, \Omega_h}) .
   \end{multline*}
Hence,
\begin{equation}\label{L2 first term} 
 \int_{\Gamma} (u - u_h)  \frac{\partial z}{\partial
   n} \le Ch^{k + 1} (\|u - u_h \|_{0, \Omega_{}}^2 + \| f \|_{k,
   \Omega_h} \|u - u_h \|_{0, \Omega_{}}) . 
\end{equation}

The second term in (\ref{L2:1}) is treated by Galerkin orthogonality
(\ref{GalOrt}): for any $ y _h \in V_h^{(k)}$ 
\begin{equation}\label{L2 second term} 
  \int_{\Omega} \nabla (u - u_h) \cdot \nabla z =
   \underbrace{\int_{\Omega_h} \nabla (\phi w - \phi_h w_h) \cdot \nabla (\phi
    y  - \phi_h  y _h)}_I 
\end{equation} 
\[ -\underbrace{\int_{\Omega_h \setminus \Omega} \nabla (\phi w - \phi_h w_h)
   \cdot \nabla (\phi  y )}_{II} \]
\[ +\underbrace{\int_{\partial \Omega_h} \frac{\partial}{\partial n}  (\phi w
   - \phi_h w_h)  (\phi_h  y _h)}_{III} \]
\[ -\underbrace{ \sigma h \sum_{E \in \mathcal{F}_h^{\Gamma}} \int_E \left[
   \frac{\partial}{\partial n} (\phi w - \phi_h w_h) \right]  \left[
   \frac{\partial}{\partial n} (\phi_h  y _h) \right]
   -
   \sigma h^2  \sum_{T \in \Th^{\Gamma}} \int_T \Delta (\phi w - \phi_h w_h)
   \Delta (\phi_h  y _h)}_{IV} \]
\[ +\underbrace{\int_{\Omega_h} (f - \tilde{f}) \phi_h  y _h - \sigma h^2 
   \sum_{T \in \mathcal{T}_h^{\Gamma}} \int_T (f - \tilde{f}) \Delta (\phi_h
    y _h)}_{V}. \]
We now estimate term by term the right-hand side of the above inequality taking $y_h=\tilde{I}_hy$ with $\tilde{I}_h$ the Cl\' ement interpolation operator on $\Th$. We shall skip some tedious technical details as they are similar to thouse in the proof of the $H^1$ error estimate above. We recall that we do not track explicitly the dependence of constants on the norms of $\phi$.

\noindent{\bf Term I}: by Cauchy-Schwartz, the already proven bound on 
$| \tilde{u} -u_h |_{1, \Omega_h}$, and $(\ref{L2y2})$
\begin{multline*} | I | \le C| \tilde{u} -u_h |_{1, \Omega_h} | \phi  y  - \phi_h  y _h
   |_{1, \Omega_h} \le Ch^{k + 1} \|f\|_{k, \Omega_h} \|  y  \|_{2,
   \Omega_h} \\\le Ch^{k + 1} \|f\|_{k, \Omega_h} \| \tilde{u} - u_h
   \|_{1, \Omega} .
   \end{multline*}

\noindent{\bf Term II}: using (\ref{eq: zbound1}) for $\tilde{z} = \phi  y $,
\[ | II | \le | \tilde{u} - u_h |_{1, \Omega_h} \text{} | \tilde{z}
   |_{1, \Omega_h \setminus \Omega} \text{} \le Ch^{k + 1 / 2} \|f\|_{k,
   \Omega_h}  \|u - u_h \|_{0, \Omega} . \]

\noindent{\bf Term III}: applying the trace inequality on the mesh elements adjacent to
$\partial \Omega_h$ yields
\[ | III | \le \left( \sum_{T \in \mathcal{T}_h^{\Gamma}} \left\{
   \frac{1}{h} | \tilde{u} - u_h |_{1, T}^2 + \sum_{T \in \mathcal{T}_h^{\Gamma}}h | \tilde{u} - u_h |_{2, T}^2
   \right\} \right)^{1 / 2} \| \phi_h  y _h \|_{0, \partial \Omega_h} .\]
The term with the sum over $T \in \mathcal{T}_h^{\Gamma}$ can be further
bounded using the triangle inequality, interpolation estimates, and the bound \eqref{triplevh}
on $v_h = \phi_h (w_h - I_h w)$ as
\begin{multline*} (\cdots)^{1 / 2} \le \left( \frac{1}{h} | \tilde{u} - \phi_h I_h w
   |_{1, \Omega_h^{\Gamma}}^2 + h | \tilde{u} - \phi_h I_h w |^2_{^{} 2, T}
   \right)^{1 / 2} + \frac{1}{\sqrt{h}} \interleave v_h \interleave_h \\
 \le Ch^{k - 1 / 2} \| f \|_{k, \Omega_h}. 
\end{multline*}
Moreover, since the distance between $\Gamma_h$ and $\partial \Omega_h$ is of
order $h$, we have 
\[\| \phi_h \|_{L^{\infty} (\partial \Omega_h)} \le
Ch\| \nabla\phi_h \|_{L^{\infty} (\partial \Omega_h)} \le
Ch\]
 and, by (\ref{L2y1}),
\[ \| \phi_h  y _h \|_{0, \partial \Omega_h} \le Ch \|  y _{} \|_{1, \Omega_h} \le Ch \|
   {u} - u_h \|_{0, \Omega} \]
so that
\[ | III | \le Ch^{k + 1 / 2} \| f \|_{k, \Omega_h} \| u - u_h \|_{0,
   \Omega} .\]

\noindent{\bf Term IV}: applying the trace inequality on the mesh elements adjacent to
$\partial \Omega_h$ yields
\[ | IV | \le (Ch^k \| f \|_{k, \Omega_h} + \interleave v_h
   \interleave_h) G_h ( y _h,  y _h)^{1 / 2} \le Ch^k \| f \|_{k,
   \Omega_h} G_h ( y _h,  y _h)^{1 / 2} \]
and by \eqref{eq: zbound2}
\begin{multline}\label{L2:10}
 G_h ( y _h,  y _h)^{1 / 2} \le \frac{C}{h} \| \phi_h  y _h
   \|_{0, \Omega_h^{\Gamma}} \le C \|  y _h \|_{0, \Omega_h^{\Gamma}}
\\
 \le C \|  y _{} \|_{0, \Omega_h^{\Gamma}} \le C |
   \tilde{z} |_{1, \Omega_h^{\Gamma}} \le C \sqrt{h} \|u - u_h \|_{0,
   \Omega} .
\end{multline}	
Hence,
\[ | IV | \le Ch^{k + 1 / 2} \| f \|_{k, \Omega_h} \|u - u_h \|_{0,
   \Omega}. \]

\noindent{\bf Term V}: by an inverse inequality and \eqref{ftildef}
\[ | V | \le \| f - \tilde{f} \|_{0, \Omega_h \setminus \Omega}  \|
   \phi_h  y _h \|_{0, \Omega_h \setminus \Omega} 
	\le Ch^{k-1}  \|f\|_{k,\Omega_h}  \| \phi_h  y _h \|_{0, \Omega_h \setminus \Omega}.\]
As we have already proved in (\ref{L2:10})
\[ \| \phi_h  y _h \|_{0, \Omega_h^{\Gamma}} \le Ch^{3 / 2}  \|u -
   u_h \|_{0, \Omega} \]
we conclude
\[ | V | \le Ch^{k + 1 / 2} \| f \|_{k, \Omega_h} \|u - u_h \|_{0,
   \Omega} .\]

Combining the bounds for the terms I--V in (\ref{L2 second term})  with (\ref{L2 first term})
and putting all this into (\ref{L2:1}), we obtain by Young inequality
\begin{multline*}
 \|u - u_h \|_{0, \Omega}^2 \le C (h^{k + 1} \|u - u_h \|_{0,
   \Omega_{}}^2 + h^{k + 1 / 2} \| f \|_{k, \Omega_h} \|u - u_h \|_{0,
   \Omega_{}}\\ + h^{k + 1} \| f \|_{k, \Omega_h} \|u - u_h \|_{1, \Omega_{}})\\
 \le Ch^{k + 1} \|u - u_h \|_{0, \Omega_{}}^2 + \frac{C}{\varepsilon}
   h^{2 k + 1} \|f\|_{k, \Omega_h}^2 + \varepsilon \|u - u_h \|_{0, \Omega}^2 +
   \varepsilon h \|u - u_h \|_{1, \Omega}^2 .
   \end{multline*}
By the already established estimate for $|u - u_h |_{1, \Omega}$,
\[ \| u - u_h \|_{0, \Omega}^2 \le C \left( \frac{1}{\varepsilon}
   + \varepsilon \right) h^{2 k + 1} \|f\|_{k, \Omega_h}^2 + (Ch^{k + 1} +
   \varepsilon + \varepsilon h)  \|u - u_h \|_{0, \Omega}^2 \]
which  proves (\ref{L2err}) taking sufficiently small $\varepsilon$ and
supposing $h$ small enough.

\section{Conditioning of the system matrix}

We are now going to prove that the condition number of the finite element
matrix associated to the bilinear form $a_h$ of $\phi$-FEM does
not suffer from the introduction of the multiplication by $\phi_h$: it is of order $1 / h^2$
on a quasi-uniform mesh of step $h$, similar to the standard FEM on a fitted mesh.

\begin{theorem}
  [Conditioning]\label{prop:cond gp} Under Assumptions \ref{asm0} and \ref{asm2} and recalling that the mesh $\Th$ is supposed to be quasi-uniform, the condition number
  ${\kappa} (\mathbf{A}) := \| \mathbf{A} \|_2 \|
  \mathbf{A}^{- 1} \|_2$ of the matrix $\mathbf{A}$ associated to the
  bilinear form $a_h$ on $V_h^{(k)}$, as in (\ref{ah}), satisfies
  \[ \kappa (\mathbf{A}) \le Ch^{- 2} . \]
  Here, $\| \cdot \|_2$ stands for the matrix norm associated to the vector
  2-norm $| \cdot |_2$.
\end{theorem}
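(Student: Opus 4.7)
My plan is to bound $\|\mathbf{A}\|_2 \le Ch^{d-2}$ and $\|\mathbf{A}^{-1}\|_2 \le ch^{-d}$ separately, whose product gives $\kappa(\mathbf{A}) \le Ch^{-2}$. Writing $\mathbf{W}^\top \mathbf{A} \mathbf{V} = a_h(v_h,w_h)$ in the Lagrange basis of $V_h^{(k)}$, I note that $a_h$ is bilinear but \emph{not} symmetric (the boundary integral is asymmetric), so for the second estimate I use the Cauchy--Schwarz identity $|\mathbf{A}\mathbf{V}|_2 \ge \mathbf{V}^\top \mathbf{A} \mathbf{V}/|\mathbf{V}|_2 = a_h(v_h,v_h)/|\mathbf{V}|_2$, meaningful because coercivity ensures positivity. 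The standard quasi-uniform equivalence $c h^d |\mathbf{V}|_2^2 \le \|v_h\|_{0,\Omega_h}^2 \le C h^d |\mathbf{V}|_2^2$ will be used throughout.

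For the upper bound I would show $|a_h(v_h,w_h)| \le C|\phi_h v_h|_{1,\Omega_h}|\phi_h w_h|_{1,\Omega_h}$ by bounding each of its three pieces: the volume term by Cauchy--Schwarz; the boundary integral by pairing a trace-plus-inverse estimate on $\partial_n(\phi_h v_h)$, giving $\|\partial_n(\phi_h v_h)\|_{0,\partial\Omega_h}^2 \le Ch^{-1}|\phi_h v_h|_{1,\Omega_h}^2$, with the bound $\|\phi_h w_h\|_{0,\partial\Omega_h}^2 \le Ch|\phi_h w_h|_{1,\Omega_h}^2$ from Lemma~\ref{Prelim2}; and $G_h(v_h,w_h)$ by Cauchy--Schwarz together with elementwise trace and inverse inequalities on jumps and Laplacians. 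The product rule (using uniform boundedness of $\phi_h$ in $W^{1,\infty}(\Omega_h)$, since $\phi_h$ interpolates the smooth $\phi$) and the inverse inequality on the quasi-uniform mesh then yield $|\phi_h v_h|_{1,\Omega_h}^2 \le C h^{-2}\|v_h\|_{0,\Omega_h}^2 \le C h^{d-2}|\mathbf{V}|_2^2$, whence $\|\mathbf{A}\|_2 \le Ch^{d-2}$.

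For the lower bound, coercivity (Lemma~\ref{lemma:coer}) gives $a_h(v_h,v_h) \ge c|\phi_h v_h|_{1,\Omega_h}^2$, so it suffices to establish the $h$-uniform Poincar\'e--Hardy inequality $\|v_h\|_{0,\Omega_h} \le C|\phi_h v_h|_{1,\Omega_h}$, since combined with $\|v_h\|_{0,\Omega_h}^2 \ge ch^d|\mathbf{V}|_2^2$ it delivers $a_h(v_h,v_h) \ge c h^d |\mathbf{V}|_2^2$ and hence $\|\mathbf{A}^{-1}\|_2 \le ch^{-d}$. My plan for this Poincar\'e--Hardy estimate combines two ingredients: (i) a Poincar\'e--Friedrichs inequality for $\phi_h v_h \in H^1(\Omega_h)$ vanishing on the $(d{-}1)$-dimensional set $\Gamma_h$, applied on $\{\phi_h<0\}\cap\Omega_h$ (a bounded domain close to the fixed $\Omega$, uniform constant by domain stability) and on the thin strip $\{\phi_h>0\}\cap\Omega_h$ (width $O(h)$, with an extra factor $h$ from Poincar\'e), yielding $\|\phi_h v_h\|_{0,\Omega_h} \le C|\phi_h v_h|_{1,\Omega_h}$; and (ii) Lemma~\ref{lemma:hardy} applied with $\phi$ replaced by $\phi_h$ and $k=0$, which by Remark~\ref{RemAsm0} only requires $\phi_h \in W^{1,\infty}$ with uniformly bounded gradient (true since $\phi_h$ interpolates the smooth $\phi$), yielding $\|v_h\|_{0,\Omega_h} = \|\phi_h v_h/\phi_h\|_{0,\Omega_h} \le C\|\phi_h v_h\|_{1,\Omega_h} \le C|\phi_h v_h|_{1,\Omega_h}$ after using (i).

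The main technical obstacle is the $h$-uniform validity of Lemma~\ref{lemma:hardy} when $\phi$ is replaced by $\phi_h$: one has to verify that the local bi-Lipschitz change of coordinates $\xi_d = \phi_h$ in a tubular neighborhood of $\Gamma_h$ has constants bounded uniformly in $h$ by those attached to $\phi$ in Assumption~\ref{asm0}. This should follow from $\|\phi-\phi_h\|_{W^{1,\infty}} = O(h^l) \to 0$ together with the nondegeneracy $|\nabla\phi| \ge c>0$ near $\Gamma$ (inherited from $\phi$ behaving locally as the signed distance), but it is the one step that is not purely routine.
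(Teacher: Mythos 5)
Your proposal is correct and follows essentially the same route as the paper: the lower bound $a_h(v_h,v_h)\ge c\,h^{d}|\mathbf{V}|_2^2$ is obtained from coercivity (Lemma~\ref{lemma:coer}) combined with the Hardy inequality applied to $\phi_h$ with $k=0$ (justified exactly as you suggest, via Remark~\ref{RemAsm0} and the uniform $W^{1,\infty}$ bound on $\phi_h$) and a Poincar\'e inequality split between $\{\phi_h<0\}$ and the $O(h)$ strip handled by Lemma~\ref{Prelim1}, while the upper bound uses inverse and trace inequalities together with the $h^{d/2}$ equivalence of the $L^2$ and coefficient norms. The one place where you are more careful than the paper is the non-symmetry of $a_h$: the paper reduces continuity to the diagonal case $w_h=v_h$ and identifies $\|\mathbf{A}\|_2$ and $\|\mathbf{A}^{-1}\|_2$ with Rayleigh quotients, which strictly requires symmetry, whereas your full bilinear bound $|a_h(v_h,w_h)|\le C|\phi_h v_h|_{1,\Omega_h}|\phi_h w_h|_{1,\Omega_h}$ together with $|\mathbf{A}\mathbf{V}|_2\ge (\mathbf{V}^{\top}\mathbf{A}\mathbf{V})/|\mathbf{V}|_2$ closes that gap; you also rightly flag the $h$-uniformity of the Hardy constant for $\phi_h$ as the only non-routine step, which the paper likewise settles only by a remark on reusing the local coordinates of Assumption~\ref{asm0} near $\Gamma_h$.
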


Before proving Theorem \ref{prop:cond gp}, we introduce some
auxiliary results:

\begin{lemma}\label{lemma:coer ah} Under the assumptions of
  Theorem \ref{prop:cond gp}, it holds for all $w_h \in V_h^{(k)}$
  \[ a_h (w_h, w_h) \ge C \|w_h \|_{0, \Omega_h}^2 . \]
\end{lemma}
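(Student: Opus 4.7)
The plan is to combine the coercivity of $a_h$ already established in Lemma~\ref{lemma:coer} with a Hardy-type inequality so as to recover $L^2$ control of $w_h$ from the $H^1$-control of the product $\phi_h w_h$. Lemma~\ref{lemma:coer} immediately yields
\[
 a_h(w_h, w_h) \ge c\interleave w_h \interleave_h^2 \ge c\, |\phi_h w_h|_{1, \Omega_h}^2,
\]
so everything reduces to proving the discrete Hardy--Poincar\'e estimate $\|w_h\|_{0, \Omega_h} \le C |\phi_h w_h|_{1, \Omega_h}$ with a constant $C$ independent of $h$.

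The first step would be to swap the approximate level-set $\phi_h$ for the exact one $\phi$, up to a small error. Standard Lagrange interpolation estimates, together with the regularity $\phi\in W^{k+1,\infty}$ implied by Assumption~\ref{asm0}, give $\|\phi - \phi_h\|_{L^\infty(\Omega_h)} \le Ch^{k+1}$ and $\|\nabla(\phi - \phi_h)\|_{L^\infty(\Omega_h)} \le Ch^k$; combining these with the inverse inequality $|w_h|_{1, \Omega_h} \le Ch^{-1}\|w_h\|_{0, \Omega_h}$ on the quasi-uniform mesh $\Th$ yields
\[
 |(\phi - \phi_h)w_h|_{1, \Omega_h} \le Ch^k \|w_h\|_{0, \Omega_h},
\]
and hence $|\phi w_h|_{1, \Omega_h} \le |\phi_h w_h|_{1, \Omega_h} + Ch^k \|w_h\|_{0, \Omega_h}$.

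The second step exploits that $\phi w_h$ vanishes on $\Gamma$. After extending $w_h$ from $\Omega_h$ to $\mathcal{O}$ via a stable linear extension $\tilde w_h$, the product $\phi\tilde w_h\in H^1(\mathcal{O})$ still vanishes on $\Gamma$. Lemma~\ref{lemma:hardy} in its $k=0$ form (which by Remark~\ref{RemAsm0} only requires $\phi\in W^{1,\infty}$) combined with a Poincar\'e inequality on $\mathcal{O}$ then gives $\|\tilde w_h\|_{0,\mathcal{O}} \le C\, |\phi\tilde w_h|_{1, \mathcal{O}}$, from which one deduces $\|w_h\|_{0, \Omega_h} \le C|\phi w_h|_{1, \Omega_h}$ by stability of the extension. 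Chaining the two steps,
\[
 \|w_h\|_{0, \Omega_h} \le C|\phi_h w_h|_{1, \Omega_h} + Ch^k \|w_h\|_{0, \Omega_h},
\]
so for $h$ small enough the last term is absorbed on the left, yielding the desired discrete Hardy inequality and hence the lemma.

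The hard part will be handling cleanly the mismatch of domains in the Hardy step: Lemma~\ref{lemma:hardy} is stated on $\mathcal{O}$ whereas $w_h$ only lives on $\Omega_h$, and a naive treatment of $|\phi\tilde w_h|_{1, \mathcal{O}\setminus\Omega_h}$ via extension stability and an inverse inequality would reintroduce an $h^{-1}$ factor and break the bound. A cleaner route is likely to reprove Lemma~\ref{lemma:hardy} directly on $\Omega_h$ with $\phi_h$ in place of $\phi$, exploiting that $\phi_h$ is uniformly Lipschitz, that $|\nabla\phi_h|$ is bounded below near $\Gamma_h$ for $h$ small, and that $\Gamma_h$ lies strictly inside $\Omega_h$; this would yield $\|w_h\|_{0, \Omega_h} \le C|\phi_h w_h|_{1, \Omega_h}$ in a single step and bypass the $\phi$--$\phi_h$ detour altogether.
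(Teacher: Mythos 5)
Your opening reduction is the same as the paper's: Lemma~\ref{lemma:coer} gives $a_h(w_h,w_h)\ge c\,|\phi_h w_h|_{1,\Omega_h}^2$, so everything hinges on a discrete Hardy--Poincar\'e bound $\|w_h\|_{0,\Omega_h}\le C|\phi_h w_h|_{1,\Omega_h}$. However, the route you actually work out in detail (swap $\phi_h$ for $\phi$ by interpolation plus an inverse inequality, extend $w_h$ to $\mathcal{O}$, apply Lemma~\ref{lemma:hardy} there) does fail at exactly the point you flag: the only available control of $|\phi\tilde w_h|_{1,\mathcal{O}}$ through a stable $H^1$-extension is $\|\tilde w_h\|_{1,\mathcal{O}}\le C\|w_h\|_{1,\Omega_h}\le Ch^{-1}\|w_h\|_{0,\Omega_h}$, which reintroduces the factor you are trying to eliminate, so as written this is not a proof. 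Your closing suggestion is the paper's actual argument, and it requires no new version of the Hardy inequality: Remark~\ref{RemAsm0} already records that for $k=0$ Lemma~\ref{lemma:hardy} only needs the level-set to be $W^{1,\infty}$, and that a continuous piecewise polynomial with almost-everywhere bounded gradient qualifies. Since $\|\phi_h\|_{W^{1,\infty}(\Omega_h)}$ is bounded uniformly in $h$ and the local coordinates of Assumption~\ref{asm0} around $\Gamma$ can be reused around $\Gamma_h$, one applies Lemma~\ref{lemma:hardy} directly to $u_h=\phi_h w_h$ on $\Omega_h$ to get $\|w_h\|_{0,\Omega_h}\le C\|u_h\|_{1,\Omega_h}$.

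One step remains that your ``single step'' phrasing glosses over: the Hardy inequality controls $\|w_h\|_{0,\Omega_h}$ by the \emph{full} norm $\|\phi_h w_h\|_{1,\Omega_h}$, while coercivity only controls the seminorm $|\phi_h w_h|_{1,\Omega_h}$. The paper closes this gap by a Poincar\'e inequality on $\Omega_h^{in}:=\{\phi_h<0\}$ (legitimate because $u_h=\phi_h w_h$ vanishes on $\Gamma_h=\partial\Omega_h^{in}$, with a constant depending only on the diameter, hence $h$-independent), together with Lemma~\ref{Prelim1} on the remaining strip $\Omega_h\setminus\Omega_h^{in}\subset\Omega_h^{\Gamma}$, which gives $\|u_h\|_{0,\Omega_h^{\Gamma}}\le Ch|u_h|_{1,\Omega_h^{\Gamma}}$. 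Your first route does invoke ``a Poincar\'e inequality,'' so you are aware of the issue, but you should make this piece explicit in the $\phi_h$-based argument; with it, the chain $a_h(w_h,w_h)\ge c|u_h|_{1,\Omega_h}^2\ge C\|u_h\|_{1,\Omega_h}^2\ge C\|w_h\|_{0,\Omega_h}^2$ is complete, and no smallness condition on $h$ is needed (unlike in your absorption argument).
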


\begin{proof}
  By Lemma \ref{lemma:coer}, it holds for each $w_h \in V_h^{(k)}$
  \[ a_h (w_h, w_h) \ge c \interleave w_h \interleave_h^2 \ge c |
     \phi_h w_h |^2_{1, \Omega_h} . \]
	We now denote $u_h=\phi_hw_h$ and apply Lemma \ref{lemma:hardy} with $k=0$ and $\phi_h$ instead of $\phi$ to $w_h=u_h/\phi_h$: 
	\begin{equation}\label{hardyH}
	\|w_h \|_{0, \Omega_h}\le C\|u_h \|_{1, \Omega_h}.
	\end{equation}
	This is justified by a possible relaxation of the hypotheses of Lemma \ref{lemma:hardy} as outlined in Remark \ref{RemAsm0}. The constant in (\ref{hardyH}) will depend on $\|\phi_h\|_{W^{1,\infty}}(\Omega_h)$ which is bounded uniformly in $h$. Moreover, the local coordinates around $\Gamma$ evoked in Assumption \ref{asm0} can be reused to build the same around $\Gamma_h$.  
	
	Applying Poincar\'e inequality on the domain $\Omega_h^{in}:=\{\phi_h<0\}$ yields, as $u_h=0$ on $\Gamma_h=\partial\Omega_h^{in}$,
	$$
	  \|u_h \|_{0, \Omega_h^{in}}\le C|u_h |_{1, \Omega_h^{in}}
	$$
	with a constant that depends only on the diameter of $\Omega_h^{in}$ and can be thus assumed $h$-independent. 
	Moreover, invoking Lemma \ref{Prelim1} and observing $\Omega_h\setminus\Omega_h^{in}\subset\Omega_h^{\Gamma}$ we conclude that
	\begin{equation}\label{poinH}
	\|u_h \|_{0, \Omega_h}\le C|u_h|_{1, \Omega_h}.
	\end{equation}
	Combining this with (\ref{hardyH}) we finish the proof as follows: 
  \[ a_h (w_h, w_h) \ge c | u_h |^2_{1, \Omega_h}\ge C \| u_h \|^2_{1, \Omega_h} \ge C
     \|w_h \|^2_{0, \Omega_h} . \]
\end{proof}

\begin{lemma}
  \label{lemma:cont ah} Under the assumptions of
  Theorem \ref{prop:cond gp}, it holds for all $u_h, w_h \in V_h^{(k)}$
  \[ a_h (w_h, v_h) \le \frac{C}{h^2} \|w_h \|_{0, \Omega} \|v_h \|_{0,
     \Omega} . \]
\end{lemma}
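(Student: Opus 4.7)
\medskip
\noindent\textbf{Proof proposal.} The plan is to bound separately each of the four contributions to $a_h(w_h,v_h)$ by Cauchy--Schwarz, then to reduce every factor to $\|w_h\|_{0,\Omega_h}$ (resp.\ $\|v_h\|_{0,\Omega_h}$) via standard inverse and trace inequalities, exploiting the crucial fact that $\phi_h w_h$ and $\phi_h v_h$ are piecewise polynomials of degree at most $k+l$ on the quasi-uniform mesh $\Th$. Note first that Assumption~\ref{asm0} together with $L^\infty$-stability of the Lagrange interpolant yields $\|\phi_h\|_{L^\infty(\Omega_h)}\le C$ uniformly in $h$, hence $\|\phi_h w_h\|_{0,\Omega_h}\le C\|w_h\|_{0,\Omega_h}$ and similarly for $v_h$. (I assume the norm in the statement is on $\Omega_h$; this is needed to be consistent with the coercivity in Lemma~\ref{lemma:coer ah} and is what the proof of Theorem~\ref{prop:cond gp} will require.)

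For the bulk term, Cauchy--Schwarz together with the inverse inequality $|p|_{1,\Omega_h}\le (C/h)\|p\|_{0,\Omega_h}$ applied to the piecewise polynomial $p=\phi_h w_h$ (resp. $\phi_h v_h$) gives directly
\[
\int_{\Omega_h}\nabla(\phi_h w_h)\cdot\nabla(\phi_h v_h)
\le |\phi_h w_h|_{1,\Omega_h}\,|\phi_h v_h|_{1,\Omega_h}
\le \frac{C}{h^2}\,\|w_h\|_{0,\Omega_h}\,\|v_h\|_{0,\Omega_h}.
\]
For the surface term on $\partial\Omega_h$, I combine a local trace inequality with an inverse inequality element by element, yielding $\|\nabla(\phi_h w_h)\|_{0,\partial T}\le (C/h^{3/2})\|\phi_h w_h\|_{0,T}$ and $\|\phi_h v_h\|_{0,\partial T}\le (C/h^{1/2})\|\phi_h v_h\|_{0,T}$; summing over the mesh elements touching $\partial\Omega_h$ and applying Cauchy--Schwarz produces the required $C/h^2$ bound.

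For the jump part of $G_h$, the same trace/inverse chain applied on each facet $E$ of an element $T$ gives $\|[\partial(\phi_h w_h)/\partial n]\|_{0,E}\le (C/h^{3/2})\|\phi_h w_h\|_{0,\omega_E}$ where $\omega_E$ collects the (at most two) elements sharing $E$. Combined with the $\sigma h$ prefactor and summed over $E\in\mathcal F_h^{\Gamma}$, this produces a contribution bounded by $(C/h^2)\|w_h\|_{0,\Omega_h}\|v_h\|_{0,\Omega_h}$. For the Laplacian part of $G_h$, the inverse inequality $\|\Delta(\phi_h w_h)\|_{0,T}\le (C/h^2)\|\phi_h w_h\|_{0,T}$ together with the $\sigma h^2$ prefactor again gives the same scaling. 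Summing the four bounds and absorbing $\|\phi_h\|_{L^\infty}$ into the constant concludes the proof.

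There is no real obstacle here: every piece of $a_h$ is designed so that the inverse-inequality penalty is at most $h^{-2}$, matching the Laplacian scaling expected of a second-order elliptic form. The only minor subtlety is keeping the bookkeeping consistent between boundary, facet, and volume terms when writing out the trace/inverse chains, but no new idea beyond the standard piecewise-polynomial inequalities on a quasi-uniform mesh is required.
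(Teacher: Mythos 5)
Your proof is correct and follows essentially the same route as the paper: Cauchy--Schwarz on each contribution to $a_h$ followed by standard trace and inverse inequalities for the piecewise polynomials $\phi_h w_h,\phi_h v_h\in V_h^{(k+l)}$, with every term scaling as $h^{-2}$. The only cosmetic difference is that the paper first reduces to the diagonal case $w_h=v_h$ (using symmetry and coercivity of $a_h$) while you bound the off-diagonal form directly term by term; you are also right that the norms in the statement should be read on $\Omega_h$.
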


\begin{proof}
  It is sufficient to prove this statement for the case $w_h = v_h .$
  Let\quad$w_h \in V_h^{(k)}$. By definition of $a_h$ and Lemma \ref{Prelim2},
  \[ a_h (w_h, w_h) \le C | \phi_h w_h |^2_{1, \Omega_h} + C \sqrt{h} 
     \left\| \frac{\partial (\phi_h w_h)}{\partial n} \right\|_{0, \partial
     \Omega_h} | \phi_h w_h |_{1, \Omega_h} + Ch^2  \sum_{T \in
     \mathcal{T}_h^{\Gamma}} | \phi_h w_h |^2_{2, T} . \]
  Using the inverse inequalities on $V_h^{(k + l)}$ 
  \[ \left\| \frac{\partial (\phi_h w_h)}{\partial n} \right\|_{0, \partial
     \Omega_h} \le \frac{C}{\sqrt{h}}  \| \phi_h w_h \|_{0, \Omega_h},
     \quad | \phi_h w_h |_{1, \Omega_h} \le \frac{C}{h}  \| \phi_h w_h
     \|_{0, \Omega_h}, \] 
	and $| \phi_h w_h |_{2, T} \le\frac{C}{h^2}  \| \phi_h w_h \|_{0, T}$
	yields
		\[ a_h (w_h, w_h) \le C \| \phi_h w_h \|^2_{0, \Omega_h}\le C \| w_h \|^2_{0, \Omega_h}\]
  since $\phi_h$ is bounded uniformly in $h$.
\end{proof}

\begin{proof}[Proof of Theorem \ref{prop:cond gp}]
  Denote the dimension of $V_h^{(k)}$ by $N$ and let us associate any $v_h \in
  V_h^{(k)}$ with the vector $\mathbf{v} \in \mathbb{R}^N$ contaning the
  expansion coefficients of $v_h$ in the standard finite element basis.
  Recalling that the mesh is quasi-uniform and using the equivalence of norms on the reference element,
  we can easily prove that
  \begin{equation}
    \label{eq:vh v} C_1 h^{d / 2} |\mathbf{v}|_2 \le \|v_h \|_{0,
    \Omega_h} \le C_2 h^{d / 2} |\mathbf{v}|_2 .
  \end{equation}
  Inequality (\ref{eq:vh v}) with Lemma \ref{lemma:cont ah} imply
$$
\|\mathbf{A}\|_2=\sup_{\mathbf{v}\in\mathbb{R}^{ N}}\dfrac{(\mathbf{A}\mathbf{v},\mathbf{v})}{|\mathbf{v}|_2^2}=\sup_{\mathbf{v}\in\mathbb{R}^{ N}}\dfrac{a(v_h,v_h)}{|\mathbf{v}|_2^2}
\le Ch^d\sup_{v_h\in V_h}\dfrac{a(v_h,v_h)}{ \|v_h\|_0^2}
\le Ch^{d-2}.
$$
  Similarly, (\ref{eq:vh v}) with Lemma \ref{lemma:coer ah} imply
  \[ \| \mathbf{A}^{- 1} \|_2 = \sup_{\mathbf{v} \in \mathbb{R}^N} \frac{|
     \mathbf{v} |_2^2}{(\mathbf{{Av}}, \mathbf{v})} =
     \sup_{\mathbf{v} \in \mathbb{R}^N} \frac{| \mathbf{v} |_2^2}{a (v_h,
     v_h)} \le \frac{C}{h^d} \sup_{v_h \in V_h} \frac{\|v_h \|_0^2}{a
     (v_h, v_h)} \le \frac{C}{h^d} . \]
  These estimates lead to the desired result.
\end{proof}

\section{Numerical results} 
We have implemented $\phi$-FEM in FEniCS Project \cite{LoggMardal2012} and report here some results using uniform Cartesian meshes on a rectangle $\mathcal{O}$ as the backgound mesh $\Th^\mathcal{O}$. 

\subsection*{1$^{\mbox{st}}$ test case}
Let $\Omega$ be the circle of radius $\sqrt{2}/4$ centered at the point $(0.5,0.5)$ and the surrounding domain $\mathcal{O}=(0,1)^2$. The level-set function $\phi$ giving this domain $\Omega$ is taken as
\begin{equation}\label{eq:test1}
\phi(x,y)= 1/8-(x-1/2)^2-(y-1/2)^2.
\end{equation}
We use $\phi$-FEM to solve numerically Poisson-Dirichlet problem \eqref{eq:poisson} with the exact solution given by 
\begin{equation}\label{eq:test1e}
u(x,y)=\phi(x,y)\times\exp(x)\times\sin(2\pi y).
\end{equation}

The results with $P_1$ finite elements are reported in Fig. \ref{fig:simu}. We give there the evolution of the errors in $L^2$ and $H^1$ norms under the mesh refinement for $\phi$-FEM with stabilization parameter $\sigma=20$ and for $\phi$-FEM without stabilization, $\sigma=0$.  The numerical results confirm the theoretically predicted optimal convergence orders (in fact, the convergence order in the $L^2$ norm is 2 and is thus better than in theory). We also observe that the ghost stabilization is indeed crucial to ensure the convergence of the method. The level-set $\phi$ is approximated here by a $P_1$ finite element function $\phi_h$, \textit{i.e.} we take $l=k$ in (\ref{eq:phi_h}). Note that the choice $l=2$ is also possible and would result in $\phi_h$ reproducing $\phi$ exactly. In practice, it produces an approximation $u_h$ of nearly the same accuracy as those with $l=1$. We choose thus not to report these results here.  

The condition number of the matrix produced by $\phi$-FEM is numerically investigated at Fig. \ref{fig:simu cond}. In accordance with Theorem \ref{prop:cond gp}, the condition number is of order $1/h^2$ at worst. We observe that the ghost stabilization ($\sigma=20$) is necessary to obtain this nice conditioning: the condition numbers produced by the naive method with $\sigma=0$ become much higher as $h\to 0$. The influence of the stabilization parameter $\sigma$ on the accuracy of $\phi$-FEM is investigated  at Fig. \ref{fig:sigma}. We observe that the accuracy of the method is only slightly affected by the value of $\sigma$ provided it is not taken too small: $\sigma$ in the range $[0.1,20]$ produce very similar errors, especially when measured in the $H^1$ semi-norm.

We finally describe the results obtained with higher order $P_k$ finite elements, $k=2,3$. The errors are reported in Fig. \ref{fig:simu2}. The optimal convergence orders under the mesh refinement are again observed (with the order $(k+1)$ in the $L^2$ norm, which is thus better than in theory). The influence of the stabilization parameter $\sigma$ on the accuracy of $\phi$-FEM with $P_2$ finite elements is investigated  at Fig. \ref{fig:sigma2}. We observe that the method works fine and is robust with respect to the value of $\sigma$ at least in the range $[0.1,20]$ (the same as for the $P_1$ elements).

\subsection*{2$^{\mbox{nd}}$ test case} 
We now choose domain $\Omega$ given by the level-set
\begin{equation}\label{eq:test2}
\phi(x,y)= -(y-\pi x-\pi)\times(y+x/pi-\pi)\times(y-\pi x+\pi)\times(y+x/pi+\pi).
\end{equation}
It is thus the rectangle with corners   $ \left(\frac{2\pi^2}{\pi^2+1},\frac{\pi^3-\pi}{\pi^2+1}\right)$,
                  $\left(0, \pi\right)$,
									$ \left(-\frac{2\pi^2}{\pi^2+1},-\frac{\pi^3-\pi}{\pi^2+1}\right)$,
                  $\left(0, -\pi\right)$.
We use $\phi$-FEM to solve numerically Poisson-Dirichlet problem \eqref{eq:poisson} in $\Omega$ with the right-hand side given by 
\begin{equation}\label{eq:test2e}
f(x,y)=1.
\end{equation}
This test case is not consistent with Assumption \ref{asm0}. We want here to test $\phi$-FEM outside of the setting where it is theoretically justified.

The results with $P_1$ and $P_2$ finite elements are reported in Fig. \ref{fig:test2 simu1}. Notwithstanding the lack of theoretical justification, we observe the optimal convergence in the case $k=1$ and somewhat close to optimal convergence in the case $k=2$. Note that $\phi_h$ is approximated in both cases with $P_k$ finite elements, \textit{i.e.} $l=k$ in (\ref{eq:phi_h}). We do not have the exact solution in this test case. We compare thus the $\phi$-FEM solution $u_h$ against a reference solution given by standard FEM on a sufficiently fine mesh fitted to the rectangle $\Omega$.


\begin{figure}
\begin{center}
\begin{tikzpicture}[thick,scale=0.8, every node/.style={scale=1.0}]
\begin{loglogaxis}[xlabel=$h$,xmin=1e-3,xmax=0.3 ,ymin=1e-6
,legend pos=south east
,legend style={ font=\large}
, legend columns=1]
 \addplot[color=blue,mark=triangle*] coordinates { 
(0.141421356237,0.873511794417)
(0.0707106781187,0.236170953338)
(0.0353553390593,0.0461366869498)
(0.0176776695297,0.00723294973919)
(0.00883883476483,0.00118910559933)
(0.00441941738242,0.000216240408077)
(0.00220970869121,4.35840617933e-05)

 };
 \addplot[color=red,mark=*] coordinates { 
(0.141421356237,0.970905613109)
(0.0707106781187,0.2956050376)
(0.0353553390593,0.0887728723381)
(0.0176776695297,0.0320169580093)
(0.00883883476483,0.0140678444911)
(0.00441941738242,0.0067306206001)
(0.00220970869121,0.00329791584965)
 };
\logLogSlopeTriangle{0.33}{0.2}{0.5}{1}{red};
\logLogSlopeTriangle{0.53}{0.2}{0.35}{2}{blue};
 \legend{$\|u-u_h\|_{0,\Omega_h}/\|u\|_{0,\Omega_h}$,$|u-u_h|_{1,\Omega_h}/|u|_{1,\Omega_h}$}
\end{loglogaxis}
\end{tikzpicture}
\begin{tikzpicture}[thick,scale=0.8, every node/.style={scale=1.0}]
\begin{loglogaxis}[xlabel=$h$,xmin=1e-3,xmax=0.3 ,ymin=1e-6
,legend pos=south east
,legend style={ font=\large}
, legend columns=1]
 \addplot[color=blue,mark=triangle*] coordinates { 
(0.141421356237,0.154081014305)
(0.0707106781187,0.0430719640976)
(0.0353553390593,0.0106913662114)
(0.0176776695297,0.00312334015409)
(0.00883883476483,0.442265423669)
(0.00441941738242,0.000219709581083)
(0.00220970869121,0.00324894740378)
 };
 \addplot[color=red,mark=*] coordinates { 
(0.141421356237,0.245635558838)
(0.0707106781187,0.116604327112)
(0.0353553390593,0.053712029551)
(0.0176776695297,0.0363051641839)
(0.00883883476483,9.60586410614)
(0.00441941738242,0.00779795419309)
(0.00220970869121,0.269612583667)
 };
\logLogSlopeTriangle{0.83}{0.2}{0.75}{1}{red};
\logLogSlopeTriangle{0.83}{0.2}{0.35}{2}{blue};
 \legend{$\|u-u_h\|_{0,\Omega_h}/\|u\|_{0,\Omega_h}$,$|u-u_h|_{1,\Omega_h}/|u|_{1,\Omega_h}$}
\end{loglogaxis}
\end{tikzpicture}
\caption{Relative errors of $\phi$-FEM for the test case \eqref{eq:test1}--\eqref{eq:test1e} and $k=1$. Left: $\phi$-FEM  with ghost penalty $\sigma=20$; Right: $\phi$-FEM without ghost penalty ($\sigma=0$).  }
\label{fig:simu}
\end{center}\end{figure}
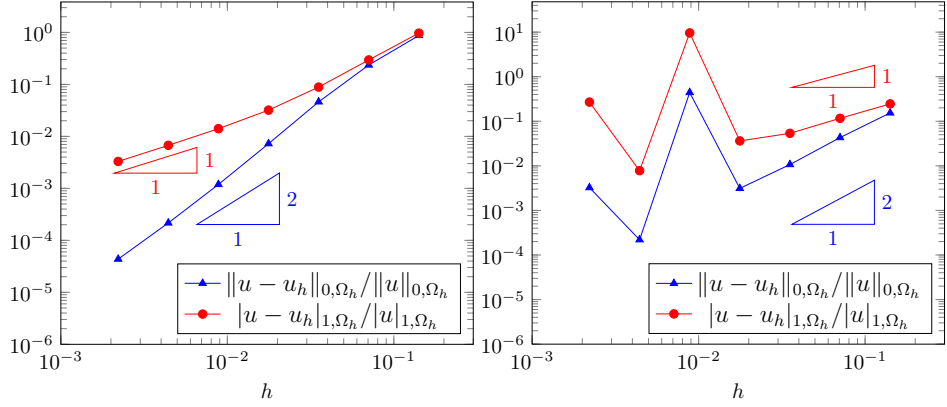

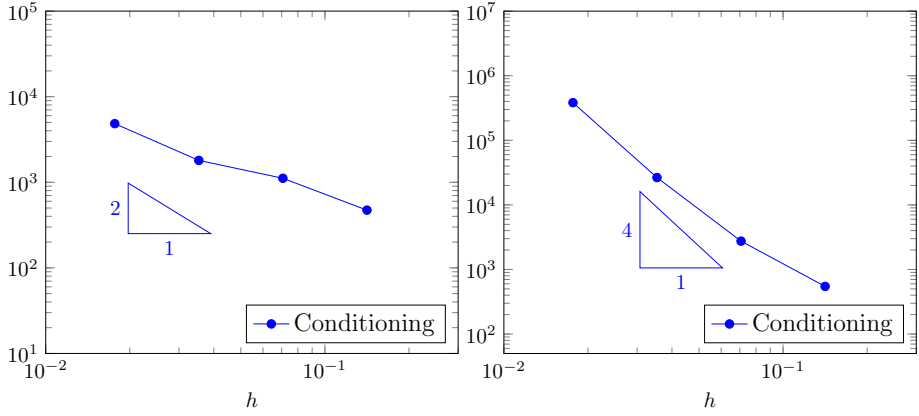
\begin{figure}
\begin{center}
\begin{tikzpicture}[thick,scale=0.8, every node/.style={scale=1.0}]
\begin{loglogaxis}[xlabel=$h$,xmin=1e-2,xmax=0.3 ,ymin=1e1,ymax=1e5
,legend pos=south east
,legend style={ font=\large}
, legend columns=1]
 \addplot[color=blue,mark=*] coordinates { 
(0.141421356237,472.653655919)
(0.0707106781187,1113.28190247)
(0.0353553390593,1801.3852442)
(0.0176776695297,4840.54481748)
 };
 \logLogSlopeTriangleinv{0.4}{0.2}{0.35}{2}{blue};
 \legend{Conditioning}
\end{loglogaxis}
\end{tikzpicture}
\begin{tikzpicture}[thick,scale=0.8, every node/.style={scale=1.0}]
\begin{loglogaxis}[xlabel=$h$,xmin=1e-2,xmax=0.3,ymin=0.5e2,ymax=1e7
,legend pos=south east
,legend style={ font=\large}
, legend columns=1]
 \addplot[color=blue,mark=*] coordinates { 
(0.141421356237,546.933790389)
(0.0707106781187,2748.32146661)
(0.0353553390593,26486.5110206)
(0.0176776695297,382886.824485)
 };
  \logLogSlopeTriangleinv{0.53}{0.2}{0.25}{4}{blue};
 \legend{Conditioning}
\end{loglogaxis}
\end{tikzpicture}
\caption{Condition numbers  for $\phi$-FEM in the test case \eqref{eq:test1} and $k=1$. Left: $\phi$-FEM  with ghost penalty $\sigma=20$; Right: $\phi$-FEM without ghost penalty ($\sigma=0$).  }
\label{fig:simu cond}
\end{center}\end{figure}

\begin{figure}
\begin{center}
\begin{tikzpicture}[thick,scale=0.8, every node/.style={scale=1.0}]
\begin{loglogaxis}[xlabel=$\sigma$,xmin=1e-5,xmax=2000 ,ymin=1e-5, ymax=1e1
,legend pos=north east
,legend style={ font=\large}
, legend columns=1]
 \addplot[color=green,mark=square*] coordinates { 
(100.0,0.013447942325)
(10.0,0.00269135544713)
(1.0,0.00146169487186)
(0.1,0.00121107405959)
(0.01,0.0019973652263)
(0.001,0.00211295791599)
(0.0001,0.00190737628392)
 };
 \addplot[color=blue,mark=triangle*] coordinates { 
(100.0,0.00197761952858)
(10.0,0.00051048114672)
(1.0,0.000358653370425)
(0.1,0.000314814356398)
(0.01,0.000466175191137)
(0.001,0.000952085523581)
(0.0001,0.00390443456128)
 };
 \addplot[color=red,mark=*] coordinates { 
(100.0,0.000290114436338)
(10.0,0.000105476737907)
(1.0,8.57831173449e-05)
(0.1,7.54217207653e-05)
(0.01,0.0125902639676)
(0.001,0.0135727850608)
(0.0001,0.062065603825)
 };
 \addplot[color=black,mark=diamond*] coordinates { 
(100.0,4.65839555038e-05)
(10.0,2.42114373731e-05)
(1.0,2.14485656293e-05)
(0.1,1.91431202155e-05)
(0.01,3.57675223357e-05)
(0.001,3.40902138993e-05)
(0.0001,3.03332551839e-05)
 };
 \legend{
  $h=1.41\times 10^{-2}$, $h=7.07\times 10^{-3}$, $h=3.54\times 10^{-3}$,
 $h=1.77\times 10^{-3}$}
\end{loglogaxis}
\end{tikzpicture}
\begin{tikzpicture}[thick,scale=0.8, every node/.style={scale=1.0}]
\begin{loglogaxis}[xlabel=$\sigma$,xmin=1e-5,xmax=2000,ymax=20
,legend pos=north east
,legend style={ font=\large}
, legend columns=1]
 \addplot[color=green,mark=square*] coordinates { 
(100.0,0.0370713574711)
(10.0,0.0228380664052)
(1.0,0.0210757847008)
(0.1,0.0208973567599)
(0.01,0.0288652213275)
(0.001,0.0236348653015)
(0.0001,0.0224867382674)
 };
 \addplot[color=blue,mark=triangle*] coordinates { 
(100.0,0.0128066016083)
(10.0,0.0107956873544)
(1.0,0.0104354354897)
(0.1,0.0105307893322)
(0.01,0.0112837460096)
(0.001,0.0270568303435)
(0.0001,0.126013336227)
 };
 \addplot[color=red,mark=*] coordinates { 
(100.0,0.00565376768203)
(10.0,0.0052790867387)
(1.0,0.00518956181338)
(0.1,0.00518023572183)
(0.01,0.52614985633)
(0.001,0.579340540734)
(0.0001,4.45062696478)
 };
  \addplot[color=black,mark=diamond*] coordinates { 
(100.0,0.00268851437048)
(10.0,0.00261098108991)
(1.0,0.00258957604048)
(0.1,0.00258719448302)
(0.01,0.00263260408141)
(0.001,0.00321895923917)
(0.0001,0.00331308428098)
 };
 \legend{
  $h=1.41\times 10^{-2}$, $h=7.07\times 10^{-3}$, $h=3.54\times 10^{-3}$,
 $h=1.77\times 10^{-3}$}
\end{loglogaxis}
\end{tikzpicture}
\caption{Influence of the ghost penalty parameter $\sigma$ on the relative errors for $\phi$-FEM in the test case \eqref{eq:test1}--\eqref{eq:test1e} and $k=1$. Left: $\|u-u_h\|_{0,\Omega_h}/\|u\|_{0,\Omega_h}$; Right: $|u-u_h|_{1,\Omega_h}/|u|_{1,\Omega_h}$.  }
\label{fig:sigma}
\end{center}\end{figure}

\begin{figure}
\begin{center}
\begin{tikzpicture}[thick,scale=0.8, every node/.style={scale=1.0}]
\begin{loglogaxis}[xlabel=$h$,xmin=1e-3,xmax=0.3 ,ymin=1e-11
,legend pos=south east
,legend style={ font=\large}
, legend columns=1]
 \addplot[color=blue,mark=triangle*] coordinates { 
(0.141421356237,0.00831635505331)
(0.0707106781187,0.00045142517359)
(0.0353553390593,3.20036659404e-05)
(0.0176776695297,3.71195777312e-06)
(0.00883883476483,4.50234880176e-07)
(0.00441941738242,5.61273676175e-08)
(0.00220970869121,7.01236777319e-09)
 };
 \addplot[color=red,mark=*] coordinates { 
(0.141421356237,0.0221836791887)
(0.0707106781187,0.0034724283226)
(0.0353553390593,0.000731932099675)
(0.0176776695297,0.000176365734205)
(0.00883883476483,4.34240063754e-05)
(0.00441941738242,1.07725987736e-05)
(0.00220970869121,2.6819562071e-06)
 };
\logLogSlopeTriangle{0.43}{0.2}{0.52}{2}{red};
\logLogSlopeTriangle{0.53}{0.2}{0.35}{3}{blue};
 \legend{$\|u-u_h\|_{0,\Omega_h}/\|u\|_{0,\Omega_h}$,$|u-u_h|_{1,\Omega_h}/|u|_{1,\Omega_h}$}
\end{loglogaxis}
\end{tikzpicture}
\begin{tikzpicture}[thick,scale=0.8, every node/.style={scale=1.0}]
\begin{loglogaxis}[xlabel=$h$,xmin=3e-3,xmax=0.3 ,ymin=1e-11
,legend pos=south east
,legend style={ font=\large}
, legend columns=1]
 \addplot[color=blue,mark=triangle*] coordinates { 
(0.141421356237,0.000330381209779)
(0.0707106781187,1.18357229865e-05)
(0.0353553390593,5.86764362779e-07)
(0.0176776695297,3.57242402101e-08)
(0.00883883476483,2.19841962548e-09)
(0.00441941738242,1.37073806632e-10)

 };
 \addplot[color=red,mark=*] coordinates { 
(0.141421356237,0.00156735001333)
(0.0707106781187,9.57193311326e-05)
(0.0353553390593,8.38160094782e-06)
(0.0176776695297,9.66049034669e-07)
(0.00883883476483,1.18062253543e-07)
(0.00441941738242,1.46280911289e-08)
 };
\logLogSlopeTriangle{0.43}{0.2}{0.6}{3}{red};
\logLogSlopeTriangle{0.6}{0.2}{0.35}{4}{blue};
 \legend{$\|u-u_h\|_{0,\Omega_h}/\|u\|_{0,\Omega_h}$,$|u-u_h|_{1,\Omega_h}/|u|_{1,\Omega_h}$}
\end{loglogaxis}
\end{tikzpicture}
\caption{Relative errors of $\phi$-FEM for the test case \eqref{eq:test1}--\eqref{eq:test1e}. Left: $k=2$; Right: $k=3$.  }
\label{fig:simu2}
\end{center}\end{figure}
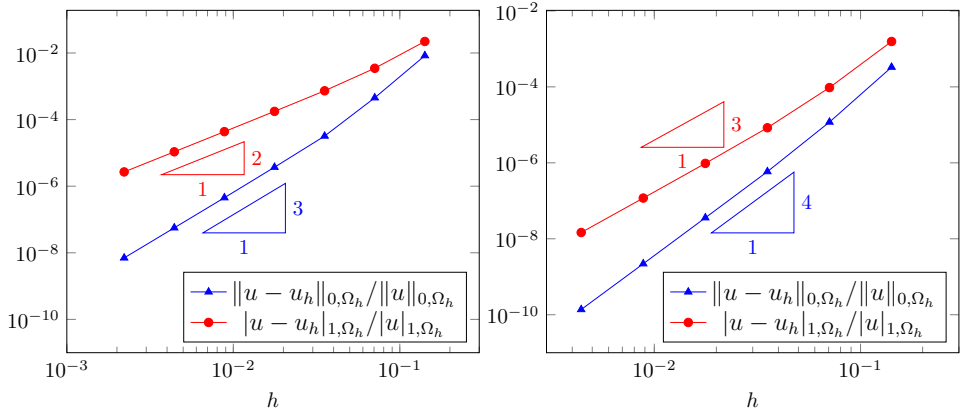

\begin{figure}
\begin{center}
\begin{tikzpicture}[thick,scale=0.8, every node/.style={scale=1.0}]
\begin{loglogaxis}[xlabel=$\sigma$,xmin=1e-5,xmax=2000 ,ymin=1e-9
,legend pos=north east
,legend style={ font=\large}
, legend columns=1]
 \addplot[color=green,mark=square*] coordinates { 
(100.0,1.88752055852e-06)
(10.0,1.85520907983e-06)
(1.0,1.85436200164e-06)
(0.1,1.94844259005e-06)
(0.01,1.17040369772e-05)
(0.001,1.07506410165e-05)
(0.0001,7.6447230241e-06)
 };
 \addplot[color=blue,mark=triangle*] coordinates { 
(100.0,2.30479830979e-07)
(10.0,2.30110431147e-07)
(1.0,2.30129662815e-07)
(0.1,2.32209203179e-07)
(0.01,3.99011709184e-05)
(0.001,1.17194857293)
(0.0001,4.96635451549)
 };
 \addplot[color=red,mark=*] coordinates { 
(100.0,2.87363825038e-08)
(10.0,2.87299557075e-08)
(1.0,2.87316883527e-08)
(0.1,2.88342645869e-08)
(0.01,5.33616420266e-07)
(0.001,3.39184149576e-06)
(0.0001,5.13818431957e-06)
 };
 \addplot[color=black,mark=diamond*] coordinates { 
(100.0,3.59012499276e-09)
(10.0,3.59004160785e-09)
(1.0,3.5900934539e-09)
(0.1,3.59299632591e-09)
(0.01,4.98821933415e-09)
(0.001,29.3894197717)
(0.0001,15.2065983166)

 };
 \legend{
  $h=1.41\times 10^{-2}$, $h=7.07\times 10^{-3}$, $h=3.54\times 10^{-3}$,
 $h=1.77\times 10^{-3}$}
\end{loglogaxis}
\end{tikzpicture}
\begin{tikzpicture}[thick,scale=0.8, every node/.style={scale=1.0}]
\begin{loglogaxis}[xlabel=$\sigma$,xmin=1e-5,xmax=2000
,legend pos=north east
,legend style={ font=\large}
, legend columns=1]
 \addplot[color=green,mark=square*] coordinates { 
(100.0,0.000112340937596)
(10.0,0.000112170206109)
(1.0,0.000112121765036)
(0.1,0.000112128929618)
(0.01,0.000239062039608)
(0.001,0.000142823889172)
(0.0001,0.000118626707906)
 };
 \addplot[color=blue,mark=triangle*] coordinates { 
(100.0,2.77012107524e-05)
(10.0,2.76969457996e-05)
(1.0,2.7695333005e-05)
(0.1,2.76952720528e-05)
(0.01,0.000834997912534)
(0.001,29.6132784023)
(0.0001,96.7740419621)

 };
 \addplot[color=red,mark=*] coordinates { 
(100.0,6.88030187546e-06)
(10.0,6.88015818101e-06)
(1.0,6.88011070214e-06)
(0.1,6.88011201147e-06)
(0.01,2.82763631198e-05)
(0.001,9.6888173371e-05)
(0.0001,0.000106762593875)

 };
  \addplot[color=black,mark=diamond*] coordinates { 
(100.0,1.71510981742e-06)
(10.0,1.71510546277e-06)
(1.0,1.71510396395e-06)
(0.1,1.71510391674e-06)
(0.01,1.72170110121e-06)
(0.001,3011.37723859)
(0.0001,391.901598457)

 };
 \legend{
  $h=1.41\times 10^{-2}$, $h=7.07\times 10^{-3}$, $h=3.54\times 10^{-3}$,
 $h=1.77\times 10^{-3}$}
\end{loglogaxis}
\end{tikzpicture}
\caption{Influence of the ghost penalty parameter $\sigma$ on the relative errors for $\phi$-FEM in the test case \eqref{eq:test1}--\eqref{eq:test1e} and $k=2$. Left: $\|u-u_h\|_{0,\Omega}/\|u\|_{0,\Omega}$; Right: $|u-u_h|_{1,\Omega}/|u|_{1,\Omega}$.  }
\label{fig:sigma2}
\end{center}\end{figure}

\begin{figure}
\begin{center}
\begin{tikzpicture}[thick,scale=0.8, every node/.style={scale=1.0}]
\begin{loglogaxis}[xlabel=$h$,xmin=3e-3,xmax=0.2 ,ymin=1e-6
,legend pos=south east
,legend style={ font=\large}
, legend columns=1]
 \addplot[color=blue,mark=triangle*] coordinates { 
(0.141421356237,0.269401360954)
(0.0707106781187,0.0700712871866)
(0.0353553390593,0.0157512657253)
(0.0176776695297,0.00321769927243)
(0.00883883476483,0.000610890295584)
(0.00441941738242,0.000108054152842)
 };
 \addplot[color=red,mark=*] coordinates { 
(0.141421356237,0.409376356706)
(0.0707106781187,0.172617335932)
(0.0353553390593,0.0677654773812)
(0.0176776695297,0.025037583361)
(0.00883883476483,0.00857797956553)
(0.00441941738242,0.00268500369835)
 };
\logLogSlopeTriangle{0.31}{0.2}{0.53}{1}{red};
\logLogSlopeTriangle{0.43}{0.2}{0.35}{2}{blue};
 \legend{$\|u_{ref}-u_h\|_{0,\Omega}/\|u_{ref}\|_{0,\Omega}$,$|u_{ref}-u_h|_{1,\Omega}/|u_{ref}|_{1,\Omega}$}
\end{loglogaxis}
\end{tikzpicture}
\begin{tikzpicture}[thick,scale=0.8, every node/.style={scale=1.0}]
\begin{loglogaxis}[xlabel=$h$,xmin=3e-3,xmax=0.2 ,ymin=1e-8
,legend pos=south east
,legend style={ font=\large}
, legend columns=1]
 \addplot[color=blue,mark=triangle*] coordinates { 
(0.141421356237,0.00287444838444)
(0.0707106781187,0.000216963066373)
(0.0353553390593,3.82265162572e-05)
(0.0176776695297,3.61312152698e-06)
(0.00883883476483,3.33487641843e-07)
(0.00441941738242,7.40791195515e-08)
 };
 \addplot[color=red,mark=*] coordinates { 
(0.141421356237,0.0126902686837)
(0.0707106781187,0.0037706718527)
(0.0353553390593,0.00141251690317)
(0.0176776695297,0.000240884404215)
(0.00883883476483,5.07259996134e-05)
(0.00441941738242,2.1126114462e-05)
 };
\logLogSlopeTriangle{0.45}{0.2}{0.53}{2}{red};
\logLogSlopeTriangle{0.65}{0.2}{0.35}{3}{blue};
 \legend{$\|u_{ref}-u_h\|_{0,\Omega}/\|u_{ref}\|_{0,\Omega}$,$|u_{ref}-u_h|_{1,\Omega}/|u_{ref}|_{1,\Omega}$}
\end{loglogaxis}
\end{tikzpicture}
\caption{Relative errors of $\phi$-FEM for the test case \eqref{eq:test2}--\eqref{eq:test2e}. Left: $k=1$; Right: $k=2$. The reference solution $u_{ref}$ is computed by a standard $FEM$ on a sufficiently fine fitted mesh on $\Omega$. }
\label{fig:test2 simu1}
\end{center}\end{figure}
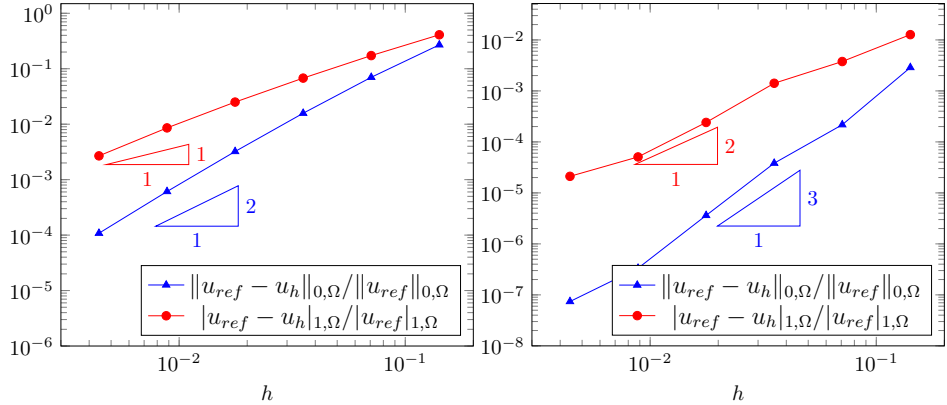

\section{Conclusions}
The numerical results from the last section confirm the theoretically predicted optimal convergence of $\phi$-FEM in the $H^1$ semi-norm. The convergence in the $L^2$ norm turns out to be also optimal, which is better than the theoretical prediction. We have thus an easily implementable optimally convergent finite element method suitable for non-fitted meshes and robust with respect to the cuts of the mesh with the domain boundary. 

Of course, the scope of the present article is very limited and academic: we only consider here the Poisson equation with homogeneous boundary conditions. An extension to non-homogeneous Dirichlet $u=g$ on $\Gamma$ is straightforward if $g$ is given in a vicinity of $\Gamma$: one can the put $u_h=g_h+\phi_hw_h$ with $g_h$ a finite element approximation to $g$ extended by 0 far from $\Gamma$. On the other hand, treating Neumann or Robin boundary conditions would be a completely different matter. We hope that the ideas from \cite{lozinski2018} could be reused under a $\phi$-FEM flavor in this case as well. Future endeavors should then be devoted to more complcated governing equations.   

%

\bibliographystyle{siamplain}
\bibliography{cutfem}
\end{document}